\newtheorem{theorem}{Theorem}[section]
\newtheorem{lemma}[theorem]{Lemma}
\newtheorem{corollary}[theorem]{Corollary}
\newtheorem{proposition}[theorem]{Proposition}
\theoremstyle{remark}
\newtheorem{remark}[theorem]{Remark}
\theoremstyle{remark}
\newtheorem{example}[theorem]{Example}
\newcommand\Ab{{{\mathcal A}b}} % the category of abelian groups
\newcommand\Md{{\text{\rm Mod}}}
\newcommand\Pm{{\text{\rm PMod}}}
\newcommand\Sm{{\text{\rm SMod}}}
\newcommand\Set{{{\mathcal S}et}}
\newcommand{\Hom}{\mathop{\rm Hom}\nolimits}
\newcommand{\Ext}{\mathop{\rm Ext}\nolimits}
\newcommand{\Ker}{\mathop{\rm Ker}\nolimits}
\newcommand{\ima}{\mathop{\rm im}\nolimits}
\newcommand\opp{{^{\text{\rm op}}}} % the opposite (category, ring, etc.)
\newcommand\CA{{\mathcal A}}
\newcommand\CB{{\mathcal B}}
\newcommand\CC{{\mathcal C}}
\newcommand\FF{{\mathfrak F}}
\newcommand\CG{{\mathcal G}}
\newcommand\CL{{\mathcal L}}
\newcommand\CO{{\mathcal O}}
\newcommand\CT{{\mathcal T}}
\newcommand\CU{{\mathcal U}}
\newcommand\CV{{\mathcal V}}
\newcommand\id{{\bf 1}}
\begin{document}

\title{Generalized lax epimorphisms in the additive case}

\author{George Ciprian Modoi}

\address{"Babe\c s-Bolyai" University, Faculty of Mathematics and Computer Science,
Chair of Algebra, 1, M. Kog\u alniceanu, RO-400084, Cluj-Napoca
\\ Romania}

%\thanks{During the work to the first version of this paper the author was supported by the grants CEEX
%47/2005. For the final version  the author is indebt to the grant
%PN2CD-ID-489}

\email[George Ciprian Modoi]{cmodoi@math.ubbcluj.ro}

\subjclass[2000]{18E15, 18A20, 18E35} \keywords{ring with several
objects; restriction functor; (generalized) lax epimorphism;
conditioned epimorphism; localization}

\begin{abstract}
In this paper we call generalized lax epimorphism a functor
defined on a ring with several objects, with values in an abelian
AB5 category, for which the associated restriction functor is
fully faithful. We characterize such a functor with the help of a
conditioned right cancellation of another, constructed in a
canonical way from the initial one. As consequences we deduce a
characterization of functors inducing an abelian localization and
also a necessary and sufficient condition for a morphism of rings
with several objects to induce an equivalence at the level of two
localizations of the respective module categories.
\end{abstract}
\maketitle

\section*{Introduction}

All categories which we deal with are preadditive, i.e. there
exists an abelian group structure on the hom sets, such that the
composition of the morphisms is bilinear. For a category $\CC$ we
denote by $\CC(-,-):\CC^\opp\times\CC\to\Ab$ the bifunctor
assigning to every pair of objects the abelian group of all maps
between them. All functors between preadditive categories are
additive i.e. preserve the addition of maps. Consider a small
preadditive category $\CU$. Recall that a preadditive category
with exactly one object is nothing but an ordinary ring with
identity, therefore small preadditive categories are also called
{\em rings with several objects}. As in the case of ordinary
rings, a {\em(right) module over} $\CU$ (or simply, an
{\em$\CU$-module}) is functor $\CU^\opp\to\Ab$. All $\CU$-modules
together with natural transformations between them form an
abelian, AB5 category denoted $\Md(\CU)$, where limits and
colimits are computed point--wise. Moreover the Yoneda functor
\[\CU\to\Md(\CU),\hbox{ given by }U\mapsto\CU(-,U)\]
is an embedding and its image form a set of (small, projective)
generators for $\Md(\CU)$, therefore $\Md(\CU)$ is a Grothendieck
category. This embedding allows us to identify an object $U\in\CU$
with its image in $\Md(\CU)$, that is with the functor $\CU(-,U)$.
In the sequel we use freely this identification. We denote by
$\Hom_\CU(X,Y)$ the set of all $\CU$-linear maps (i.e. natural
transformations) between the $\CU$-modules $X$ and $Y$; that is
$\Hom_\CU(X,Y)=\Md(\CU)(X,Y)$.

Following \cite{ABS}, a functor between small non-additive
categories $T:\CU\to\CV$ is called a {\em lax epimorphism},
provided that the functor
\[T_*:[\CV\opp,\Set]\to[\CU\opp,\Set],\ T_*X=X\circ T\] is fully faithful
(Here $[\CU\opp,\Set]$ denotes the category of all contravariant
functors from $\CU$ to the category of sets). We shall use the
same terminology in the additive case (consequently, replacing
$[\CU\opp,\Set]$ with $\Md(\CU)$). We consider now a functor
$T:\CU\to\CC$, where $\CU$ a ring with several objects and $\CC$
is any cocomplete, abelian category. Then there is a unique, up to
a natural isomorphism, colimit preserving functor
$T^*:\Md(\CU)\to\CC$ such that $T^*\CU(-,U)=TU$, for all
$U\in\CU$. The functor $T^*$ has a right adjoint, namely the
functor
\[T_*:\CC\to\Md(\CU),\ T_*C=\CC(T-,C)\hbox{ for all }C\in\CC.\]
The functors $T^*$ and $T_*$ will be called the {\em induction},
respectively the {\em restriction functor} associated to $T$, and
the adjoint pair $(T^*,T_*)$ is said to be {\em induced by $T$}.
In accord with the above terminology, we call the functor $T$ {\em
generalized lax epimorphism}, if the associated restriction
functor $T_*$ is fully faithful.

For an additive functor $F$, we denote by $\Ker F$ the full
subcategory of the domain of $F$, consisting of all objects which
are annihilated by $F$, in contrast with $\ker$ which denotes the
categorical notion of kernel.

By an {\em abelian localization} we understood a pair of adjoint
functors between two abelian categories, with the properties that
the left adjoint is exact and the right adjoint is fully faithful.

Let $\CG$ be a ring with several objects. Recall that a {\em
localizing subcategory} in $\Md(\CG)$ is a full subcategory closed
under subobjects, quotients, direct sums and extensions.
Obviously, $\Ker F$ is a localizing subcategory, provided that $F$
is an exact, colimit preserving functor. It is well--known, that a
localizing subcategory is nothing but a hereditary torsion class,
so modules belonging to such a subcategory are called sometimes
{\em torsion modules}.

Consider a localizing subcategory $\CL$ in $\Md(\CG)$. We call
{\em $\CL$-torsion free} ({\em $\CL$-closed}) an object
$X\in\Md(\CG)$ satisfying $\Hom_\CG(L,X)=0$ (respectively
$\Hom_\CG(L,X)=\Ext_\CG^1(L,X)=0$) for all $L\in\CL$, where
$\Ext_\CG^1$ denotes as usually the first derived functor of
$\Hom_\CG$. We construct, as in \cite{G}, the quotient category
$\CC=\Md(\CG)/\CL$ together with the canonical (exact) functor
$Q:\Md(\CG)\to\CC$, called also the quotient functor, which has a
fully faithful right adjoint $R:\CC\to\Md(\CG)$. Clearly the pair
$(Q,R)$ is an abelian localization. Then $\CC$ is a Grothendieck
category, $\CL=\Ker Q$ and $R$ identifies $\CC$ with the full
subcategory of $\Md(\CG)$ consisting of all $\CL$-closed modules
(also see \cite{G}). Note also that $Q$ sends every morphism with
torsion kernel and cokernel in $\Md(\CG)$ into an isomorphism in
$\CC$, and is universal with this property. In particular, if
$F:\Md(\CG)\to\CA$ is an exact functor into an abelian category,
which annihilates all torsion $\CG$-modules, then $F$ factors
uniquely through $Q$.

In this paper we characterize a functor which is a generalized lax
epimorphism, with the help of a conditioned right cancellation of
a functor constructed in a canonical way from the initial one; see
Theorem \ref{resff}. As consequences we deduce in Corollary
\ref{abloc} a characterization of functors inducing an abelian
localization, and in Corollary \ref{relative} an additive version
of the ``Lemme de comparaison'' (see \cite[Theorem 4.1]{SGA4}),
giving a necessary and sufficient condition for a morphism of
rings with several objects to induce an equivalence at the level
of two localizations of the respective module categories. Note
that we shall call conditioned epimorphism a functor satisfying
the above mentioned conditional cancellation property. First we
study such functors in Section \ref{gcface}, the main result being
Theorem \ref{condepi}. We give also applications of our
characterizations for some more or less classical cases. Thus we
deduce the classical results concerning of (flat) epimorphisms of
unitary rings (Proposition \ref{ringepi} and Corollary
\ref{flatep}), but also the main result of Krause's paper
\cite{KE}, concerning epimorphisms up to direct factors
(Proposition \ref{epiupf}). Another characterization of a functor
which induces an abelian localization as in Corollary \ref{abloc}
is the subject of \cite{L}. Inspired by this approach we found in
Proposition \ref{gf} some sufficient conditions for a functor to
be a generalized lax epimorphism. In addition we discuss an
example (Example \ref{exl}), where we clarify  a point which is
called ``obscure'' in \cite{L}.

\section{Generalized closed functors and conditioned epimorphisms}\label{gcface}

We fix in this Section the notations as follows: $\CG$ is a ring
with several objects, $\CL$ is a localizing subcategory of
$\Md(\CG)$, $\CC=\Md(\CG)/\CL$ is the corresponding quotient
category, with the quotient functor $Q:\Md(\CG)\to\CC$, having the
right adjoint $R:\CC\to\Md(\CG)$.  We consider also a morphism of
rings with several objects $S:\CU\to\CG$.

We call {\em generalized $\CL$-closed} a functor $F:\CG\to\CA$,
into a cocomplete, abelian category $\CA$, provided that the
induced functor $F^*:\Md(\CG)\to\CA$ annihilates all torsion
modules (that is $\CL\subseteq\Ker F^*$) and $F^*$ preserves
exactness of sequences of the form $0\to M\to N\to L\to 0$ with
$L\in\CL$. About the morphism of rings with several objects $S$ we
say that it is a {\em $\CL$-conditioned epimorphism} if the
equality $F\circ S=F'\circ S$ implies $F=F'$, provided that the
supplementary condition $F$ is generalized $\CL$-closed holds
true. Remark that an this implication, without any supplementary
condition, means precisely that $S$ is an epimorphism in the
category of rings with several objects (see Lemma \ref{small}
bellow). In the next proposition we characterize those functors
which are generalized $\CL$-closed. Note first:

\begin{remark}\label{term} A module $X\in\Md(\CG)$ is
$\CL$-closed, in the classical sense, if and only if the functor
$X\opp:\CG\to\Ab\opp$ is generalized $\CL$-closed, explaining our
terminology. Indeed, it is enough to observe that the induced
functor is given by
\[(X\opp)^*=\Hom_\CG(-,X):\Md(\CG)\to\Ab\opp.\]
\end{remark}

\begin{proposition}\label{gencl}
The following are equivalent for a functor $F:\CG\to\CA$ into a
cocomplete, abelian category $\CA$:
\begin{itemize}
\item[{\rm (i)}] The functor $F:\CG\to\CA$ is generalized
$\CL$-closed. \item[{\rm (ii)}] $F_*A$ is $\CL$-closed for all
$A\in\CA$, or equivalently there exists $F_\star:\CA\to\CC$ such
that $F_*\cong R\circ F_\star$. \item[{\rm (iii)}] $F^*$ factors
trough $Q$ i.e. there exists $F^\star:\CC\to\CA$ such that
$F^*\cong F^\star\circ Q$.
\end{itemize}
Moreover if these conditions are satisfied, then $F^\star$ is the
left adjoint of $F_\star$.
\end{proposition}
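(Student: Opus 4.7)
The plan is to establish (i) $\Leftrightarrow$ (ii), then (iii) $\Rightarrow$ (i), and finally (ii) $\Rightarrow$ (iii), deducing the adjointness of $F^\star$ and $F_\star$ along the way.

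The equivalence (i) $\Leftrightarrow$ (ii) is essentially a translation under the adjunction $F^*\dashv F_*$, provided one first notes that a module $X\in\Md(\CG)$ is $\CL$-closed exactly when $\Hom_\CG(N,X)\to\Hom_\CG(M,X)$ is an isomorphism for every short exact sequence $0\to M\to N\to L\to 0$ with $L\in\CL$. This characterisation follows from the long $\Ext$-exact sequence: applied to the split sequence $0\to L\to L\oplus L\to L\to 0$ it extracts $\Hom_\CG(L,X)=0$, and applied to an arbitrary extension $0\to X\to E\to L\to 0$ it forces a splitting, hence $\Ext^1_\CG(L,X)=0$. Taking $X=F_*A$ and transporting through the natural isomorphism $\CA(F^*-,A)\cong\Hom_\CG(-,F_*A)$, the requirement that each $F_*A$ be $\CL$-closed becomes the requirement that $F^*M\to F^*N$ be an isomorphism whenever $N/M\in\CL$, which, combined with the right-exactness of the left adjoint $F^*$, is precisely (i). The second form of (ii) (the factorisation $F_*\cong R\circ F_\star$) is then immediate because $R$ identifies $\CC$ with the full subcategory of $\CL$-closed modules.

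Implication (iii) $\Rightarrow$ (i) is formal: $Q$ is exact and annihilates $\CL$, so the same holds for $F^\star\circ Q\cong F^*$; moreover an inclusion $M\hookrightarrow N$ with torsion cokernel is sent to an isomorphism by $Q$, hence by $F^*$.

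For (ii) $\Rightarrow$ (iii) together with the adjointness, the natural definition is $F^\star:=F^*\circ R$. The chain of natural bijections
\[\CA(F^\star QM,A)=\CA(F^*RQM,A)\cong\Hom_\CG(RQM,F_*A)\cong\Hom_\CG(RQM,RF_\star A)\cong\CC(QM,F_\star A),\]
obtained from $F^*\dashv F_*$, the hypothesis (ii), and the fully faithfulness of $R$, already yields $F^\star\dashv F_\star$, since every object of $\CC$ has the form $QM$ (take $M=RC$, using $QR\cong\mathrm{id}_\CC$). Continuing the chain with $Q\dashv R$ and then $F^*\dashv F_*$ delivers $\CC(QM,F_\star A)\cong\CA(F^*M,A)$, and Yoneda upgrades this to $F^\star\circ Q\cong F^*$, which is (iii). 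The only step requiring genuine care is the $\Hom$-level characterisation of $\CL$-closed modules needed for (i) $\Leftrightarrow$ (ii); once that is in hand, every remaining implication, and in particular both (iii) and the adjointness $F^\star\dashv F_\star$, drops out of the same chain of canonical isomorphisms.
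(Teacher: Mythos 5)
Your proof is correct, but it executes two of the three implications by genuinely different means than the paper. For (i)$\Leftrightarrow$(ii) you isolate a $\Hom$-level characterization of $\CL$-closedness --- $X$ is $\CL$-closed iff $\Hom_\CG(N,X)\to\Hom_\CG(M,X)$ is bijective for every exact sequence $0\to M\to N\to L\to 0$ with $L\in\CL$, the split sequence extracting $\Hom_\CG(L,X)=0$ and arbitrary extensions of $X$ by $L$ extracting $\Ext^1_\CG(L,X)=0$ --- and then transport across $F^*\dashv F_*$ plus Yoneda; this yields both directions at once and needs no projectivity, whereas the paper proves only (i)$\Rightarrow$(ii) directly, choosing $N$ projective so that $\Ext^1_\CG(N,F_*A)=0$ kills the long exact sequence, and recovers (ii)$\Rightarrow$(i) only around the cycle of implications. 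For (ii)$\Rightarrow$(iii) the paper argues via the universal property of the quotient functor $Q$: it shows $F^*\alpha$ is an isomorphism whenever $Q\alpha$ is, being an epimorphism by right exactness (torsion cokernel) and a split monomorphism because $\CA(F^*\alpha,A)\cong\CC(Q\alpha,F_\star A)$ is bijective; you instead define $F^\star:=F^*\circ R$ outright and verify $F^\star\circ Q\cong F^*$ and $F^\star\dashv F_\star$ simultaneously from a single chain of adjunction isomorphisms, never invoking the universal property. Your route is more economical --- the factorization and the adjointness drop out of the same computation, which is essentially the chain the paper runs separately at the very end to identify $F^\star\cong F^*\circ R$ --- while the paper's route buys the slightly stronger intermediate fact that $F^*$ inverts every morphism inverted by $Q$. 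One wording caveat: in your (iii)$\Rightarrow$(i), the clause ``$Q$ is exact and annihilates $\CL$, so the same holds for $F^\star\circ Q\cong F^*$'' should be read as referring only to the annihilation of $\CL$, since $F^\star$ need not be exact; but the second half of your sentence (monomorphisms with torsion cokernel go to isomorphisms under $Q$, hence under $F^*$) supplies exactly what (i) requires, so the argument stands.
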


\begin{proof} (i)$\Rightarrow$(ii). Let $A\in\CA$ and $L\in\CL$. The isomorphism
\[\Hom_\CG(L,F_*A)\cong\CA(F^*L,A)=\CA(0,A)=0\] shows that
$F_*A$ is $\CL$-torsion free. Further we consider a short exact
sequence $0\to M\to N\to L\to 0$, with $N$ projective and
$L\in\CL$. By assumption we have $F^*M\cong F^*N$, so
\[\Hom_\CG(M,F_*A)\cong\CA(F^*M,A)\cong\CA(F^*N,A)\cong\Hom_\CG(N,F_*A).\]
Using this together with the exact sequence of abelian groups
\begin{align*}
0=\Hom_\CG(L,F_*A)&\to\Hom_\CG(N,F_*A)\to\Hom_\CG(M,F_*A)\\
&\to\Ext^1_\CG(L,F_*A)\to\Ext^1_\CG(N,F_*A)=0,
\end{align*}
we deduce $\Ext^1_\CG(L,F_*A)=0$, thus $F_*A$ is $\CL$-closed.
Since $R$ is fully faithful, this is property is equivalent to the
factorization of $F_*$ through $R$.

(ii)$\Rightarrow$(iii). First we shall show that $F^*L=0$ for all
$L\in\CL$. Indeed for all $A\in\CA$ the isomorphism
\[\CA(F^*L,A)\cong\Hom_\CG(L,F_*A)=0\] proves our claim.
Let now $\alpha:M\to N$ be a $\CG$-linear map such that $Q\alpha$
is an isomorphism. In particular, the cokernel of this map belongs
to $\CL$, so $F^*\alpha$ is an epimorphism, since $F^*$ is right
exact. Moreover, for all $A\in\CA$ we have the isomorphisms (in
the category of abelian group homomorphisms):
\[\CA(F^*\alpha,A)\cong\Hom_\CG(\alpha,F_*A)\cong\Hom_\CG(\alpha,(R\circ
F_\star)A)\cong\CC(Q\alpha,F_\star A),\] showing that
$\CA(F^*\alpha,A)$ is bijective, therefore $F^*\alpha$ is a split
monomorphism. Thus $F^*\alpha$ is an isomorphism, so $F^*$ factors
through $Q$.

(iii)$\Rightarrow$(i) is obvious.

Using the fully faithfulness of $R$, we have the following natural
isomorphisms, for all $A\in\CA$ and all $C\in\CC$:
\[\CC(C,F_\star A)\cong\Hom_\CG(RC,(R\circ
F_\star)A)\cong\Hom_\CG(RC,F_*A)\\
\cong\CA((F^*\circ R)C,A),\] showing that $F^*\circ R\cong
F^\star\circ Q\circ R\cong F^\star$ is the left adjoint of
$F_\star$.
\end{proof}

In the sequel we want to characterize the conditioned epimorphisms
of rings with several objects. For an easier reference we recall
the following:

\begin{lemma}\label{lradj}\cite[Section 4]{KF} With the above notations, the
restriction functor
\[S_*:\Md(\CG)\to\Md(\CU),\ S_*X=X\circ S\hbox{ for all
}X\in\Md(\CG)\] has not only a left adjoint, namely the induction
functor, which is determined uniquely up to a natural isomorphism
by
\[S^*:\Md(\CU)\to\Md(\CG),\ S^*U=SU\ (U\in\CU)\hbox{ and }S^*\hbox{ is
colimits preserving},\] but also a right adjoint, respectively the
functor
\[^*S:\Md(\CU)\to\Md(\CG),\ (^*SX)G=\Hom_\CU(\CG(S-,G),X).\] Consequently, $S_*$ is
exact and preserves limits and colimits. \end{lemma}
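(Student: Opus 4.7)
The plan is to handle the three assertions in turn: existence of the left adjoint, existence of the right adjoint, and the preservation properties.

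First, for the left adjoint, I would invoke the universal property of presheaf/module categories that was already used implicitly in the introduction to produce the induction functor $T^*$ from $T:\CU\to\CC$. Applied to the composite $\CU\xrightarrow{S}\CG\hookrightarrow\Md(\CG)$ (via Yoneda), it yields a colimit-preserving $S^*:\Md(\CU)\to\Md(\CG)$ with $S^*\CU(-,U)\cong\CG(-,SU)$, uniquely determined up to natural isomorphism by these requirements. The adjunction $(S^*,S_*)$ is then part of the same standard construction: for $U\in\CU$, $X\in\Md(\CG)$ one checks $\Hom_\CG(S^*\CU(-,U),X)\cong X(SU)=(S_*X)(U)\cong\Hom_\CU(\CU(-,U),S_*X)$ by Yoneda, and extends by colimits in the first variable.

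For the right adjoint, the main task is to verify the hom-isomorphism
\[\Hom_\CU(S_*Y,X)\cong\Hom_\CG(Y,{}^*SX)\]
naturally in $Y\in\Md(\CG)$ and $X\in\Md(\CU)$. I would first check it on representables: for $Y=\CG(-,G)$ one has $(S_*Y)(U)=\CG(SU,G)$, so by the very definition of ${}^*S$,
\[\Hom_\CU(S_*\CG(-,G),X)=\Hom_\CU(\CG(S-,G),X)=({}^*SX)(G),\]
while the right side equals $({}^*SX)(G)$ by Yoneda. Naturality in $G$ (which encodes the $\CG$-module structure of ${}^*SX$) then makes the isomorphism natural on representables. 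To extend to arbitrary $Y$, write $Y$ as a colimit of representables; both sides send this colimit to a limit of abelian groups, using on the left that $S_*$ preserves colimits (since colimits in $\Md(\CG)$ and $\Md(\CU)$ are pointwise and $S_*$ is precomposition with $S$) and that $\Hom_\CU(-,X)$ converts colimits to limits, and on the right that $\Hom_\CG(-,{}^*SX)$ does the same. Hence the isomorphism on representables propagates to all modules, establishing the adjunction $(S_*,{}^*S)$.

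Finally, the consequence is automatic: having both adjoints, $S_*$ preserves all colimits (as the left adjoint of ${}^*S$) and all limits (as the right adjoint of $S^*$); in particular, it preserves kernels and cokernels, hence is exact. This is really the punchline that will be used repeatedly later.

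The main obstacle, if any, is the naturality bookkeeping needed when extending the hom-isomorphism from representables to arbitrary modules; everything else is formal consequences of Yoneda and the fact that module categories over rings with several objects are presheaf categories with pointwise (co)limits.
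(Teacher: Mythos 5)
Your proof is correct, but it necessarily takes a different route from the paper, because the paper gives no proof at all for this lemma: it simply cites \cite[Section 4]{KF}, where the statement is established for restriction along a functor between small preadditive categories. What you supply is the standard Kan-extension verification, and it is sound: the left adjoint via the universal property of $\Md(\CU)$ as the (additive) free cocompletion of $\CU$, with the adjunction checked on representables by Yoneda and extended by colimits; the right adjoint ${}^*S$ recognized as the right Kan extension, using that $S_*\CG(-,G)=\CG(S-,G)$ so that $({}^*SX)(G)=\Hom_\CU(S_*\CG(-,G),X)$ is forced; and exactness from having adjoints on both sides. Two small remarks. First, the preservation of limits and colimits by $S_*$ is already immediate from the pointwise computation of (co)limits in module categories (a fact recorded in the paper's introduction, and which you yourself invoke midway), so the final consequence does not actually need the adjoints; only exactness is most cleanly phrased that way, and even that follows pointwise. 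Second, the one step you leave as ``naturality bookkeeping''---propagating the isomorphism $\Hom_\CU(S_*Y,X)\cong\Hom_\CG(Y,{}^*SX)$ from representables to all $Y$---does require a globally defined natural transformation before colimit arguments apply; this is routine to supply, either explicitly (send $\phi:S_*Y\to X$ to $\psi:Y\to{}^*SX$ where $\psi_G(y)$ is the natural transformation $g\mapsto\phi_U\bigl(Y(g)(y)\bigr)$ for $g\in\CG(SU,G)$) or by taking a free presentation $\bigoplus_j\CG(-,G'_j)\to\bigoplus_i\CG(-,G_i)\to Y\to 0$ and comparing the two resulting left-exact sequences by the five lemma. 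Since you flag the issue and the fix is standard, I count this as a routine omission rather than a gap; your argument is a legitimate self-contained replacement for the paper's external citation.
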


Note that the restriction and the induction functor from the
preceding Lemma agree with those defined in Introduction, after
the identification of a ring with several objects with its image
in the module category over that ring, via the Yoneda embedding.

\begin{lemma}\label{sisff} If $S$ is surjective on objects, then the
restriction functor $S_*$ is faithful and reflects isomorphisms.
\end{lemma}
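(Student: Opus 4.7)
The plan is to exploit the fact that both $\Md(\CG)$ and $\Md(\CU)$ are functor categories with values in $\Ab$, so that limits and colimits, and hence the property of being an isomorphism, are detected pointwise at each object (as recalled in the Introduction). The single combinatorial input I need is the obvious identity $(S_*f)_U = f_{SU}$ for the components of a natural transformation $f\in\Hom_\CG(X,Y)$; surjectivity of $S$ on objects then says precisely that the family of components $\{(S_*f)_U\}_{U\in\CU}$ exhausts the family $\{f_G\}_{G\in\CG}$.

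For faithfulness, take $f,g:X\to Y$ in $\Md(\CG)$ with $S_*f=S_*g$. Componentwise this reads $f_{SU}=g_{SU}$ for all $U\in\CU$. By the surjectivity of $S$ on objects, every $G\in\CG$ is of the form $SU$ for some $U$, so $f_G=g_G$ for all $G\in\CG$, and hence $f=g$. For reflecting isomorphisms, assume $S_*f$ is an isomorphism in $\Md(\CU)$; then each component $(S_*f)_U=f_{SU}$ is an isomorphism of abelian groups, and again by surjectivity on objects we conclude that $f_G$ is an isomorphism in $\Ab$ for every $G\in\CG$. Since isomorphisms in $\Md(\CG)$ are detected pointwise, $f$ is an isomorphism.

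There is really no substantial obstacle here; the argument is essentially an unpacking of definitions. The only point deserving care is the pointwise characterization of isomorphisms in $\Md(\CG)$, which we already have for free from the fact that this category has pointwise (co)limits. It is also worth noting that faithfulness can alternatively be phrased as the kernel-detection statement $S_*f=0\Rightarrow f=0$, since $S_*$ is additive, but the direct componentwise argument above is the most transparent.
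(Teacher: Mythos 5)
Your proof is correct, but it takes a different route from the paper. The paper argues abstractly: by its Lemma \ref{lradj}, $S_*$ has both a left and a right adjoint, hence is exact; surjectivity of $S$ on objects gives that $S_*$ reflects zero objects ($X\circ S=0$ implies $X=0$); and an exact functor between abelian categories that reflects zero objects is automatically faithful (since $F(\ima f)=\ima Ff$) and reflects isomorphisms (since it reflects the vanishing of $\ker f$ and $\coker f$). You instead unpack everything componentwise via the identity $(S_*f)_U=f_{SU}$ and the pointwise detection of isomorphisms in the functor category $\Md(\CG)$. Your argument is more elementary and self-contained --- it needs neither the exactness of $S_*$ nor Lemma \ref{lradj} --- and it makes transparent exactly where surjectivity on objects enters. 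The paper's argument, on the other hand, is reusable in settings where the categories involved are not functor categories with pointwise structure, and it fits the machinery already set up (the same exactness of $S_*$ is used again, e.g., in Lemma \ref{qss}). One small remark: your appeal to pointwise (co)limits to justify pointwise detection of isomorphisms is legitimate (a map is an isomorphism iff its pointwise-computed kernel and cokernel vanish), though the even more direct observation that the pointwise inverses of a componentwise-invertible natural transformation are automatically natural would suffice.
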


\begin{proof}
Since $S$ is surjective on objects, it follows that $X\circ S=0$
implies $X=0$ for all $X\in\Md(\CG)$, what means $S_*$ reflects
zero objects. By Lemma \ref{lradj}, the functor $S_*$ is exact
therefore it commutes with images. But such a functor (exact and
reflecting zero objects) is faithful and reflects isomorphisms.
\end{proof}

\begin{lemma}\label{qss} Suppose that $G$ is a $\CL$-closed
module, for all $G\in\CG$ and $S$ is surjective on objects. Then
the following are equivalent:
\begin{itemize}
\item[{\rm (i)}] $S_*\circ R$ is fully faithful. \item[{\rm (ii)}]
$Q\circ S^*\circ S_*\cong Q$ naturally. \item[{\rm (iii)}]
$(Q\circ S^*\circ S_*)G\cong QG$ naturally, for all $G\in\CG$.
\end{itemize}
\end{lemma}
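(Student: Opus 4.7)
The plan is to recognize the pair $(Q\circ S^*,\; S_*\circ R)$ as an adjunction $\Md(\CU)\rightleftarrows\CC$, obtained by composing the induction--restriction adjunction of Lemma \ref{lradj} with the localization adjunction $(Q,R)$. By the standard criterion, (i) is then equivalent to the counit $\varepsilon_C\colon QS^*S_*RC\to C$ being a natural isomorphism in $C\in\CC$.

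For (ii)$\Rightarrow$(i), I would evaluate the natural iso $QS^*S_*\cong Q$ at $RC$ to obtain $QS^*S_*RC\cong QRC\cong C$, where the last step uses $QR\cong\id_\CC$, a consequence of $R$ being fully faithful; this is exactly the counit being an iso. Conversely, for (i)$\Rightarrow$(iii), I would evaluate the counit at $C=QG$ with $G\in\CG$: under the hypothesis that each representable $G$ is $\CL$-closed, the unit $G\to RQG$ is an iso natural in $G$, so substituting $RQG\cong G$ into the counit iso $QS^*S_*RQG\cong QG$ yields $(QS^*S_*)G\cong QG$, naturally in $G$. The implication (ii)$\Rightarrow$(iii) is immediate by restricting the natural iso of (ii) to representables.

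The substantive step is (iii)$\Rightarrow$(ii). Here I would observe that both $Q\circ S^*\circ S_*$ and $Q$ are colimit-preserving functors $\Md(\CG)\to\CC$: indeed $Q$ and $S^*$ are left adjoints, and $S_*$ preserves arbitrary colimits by Lemma \ref{lradj}. Since every $\CG$-module is a canonical colimit of representables (either via the co-Yoneda formula, or as the cokernel of a map between direct sums of Yoneda objects), a natural iso between two such cocontinuous functors, once established on the subcategory of representables, extends uniquely to a natural iso on all of $\Md(\CG)$.

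I expect this density step to be the main source of care: one must verify that the pointwise isomorphisms supplied by (iii) are genuinely natural in $G\in\CG$, and that the unique extension along the Yoneda embedding $\CG\hookrightarrow\Md(\CG)$ produces the natural iso asserted in (ii). The hypothesis that $S$ is surjective on objects does not seem to enter the lemma itself; it appears to be a standing assumption inherited from the surrounding context (compare Lemma \ref{sisff}) and destined for use in subsequent applications.
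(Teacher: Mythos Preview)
Your argument is correct, but it differs from the paper's in a meaningful way. The paper begins by invoking Lemma~\ref{sisff} (which \emph{does} use that $S$ is surjective on objects) to conclude that $S_*$ is faithful, hence that the counit $\mu_X\colon S^*S_*X\to X$ is an epimorphism for every $X$. All three conditions are then reformulated as ``$\ker\mu_X\in\CL$'' for the appropriate class of $X$ (closed, all, representable), and (iii)$\Rightarrow$(ii) is obtained by taking a short exact sequence $0\to Y\to\bigoplus G_i\to X\to 0$, applying the colimit-preserving functor $S^*S_*$, and reading off from the snake lemma that $\ker\mu_X$ is a quotient of $\bigoplus\ker\mu_{G_i}$. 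Your route via the composed adjunction $(QS^*,S_*R)$ and the cocontinuity of $Q$, $S^*$, $S_*$ bypasses the epimorphism property of $\mu$ entirely; in particular your observation that surjectivity of $S$ on objects is not needed is correct \emph{for your proof}, whereas the paper genuinely uses it. What your approach buys is a cleaner, more categorical argument that generalizes; what the paper's buys is an explicit element-level description of why (iii) propagates to (ii).

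One small point worth making explicit: both your argument and the paper's tacitly read the ``natural isomorphism'' in (ii) and (iii) as the canonical map $Q\mu$. Your step ``this is exactly the counit being an iso'' in (ii)$\Rightarrow$(i) uses this reading, and so does the paper when it passes from (iii) to $\ker\mu_G\in\CL$. With that interpretation your density argument for (iii)$\Rightarrow$(ii) is immediate: $Q\mu$ is a natural transformation between cocontinuous functors, an isomorphism on representables, hence an isomorphism everywhere.
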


\begin{proof} According to Lemma \ref{sisff}, $S_*$ is
faithful, hence the arrow of adjunction
\[\mu_X:(S^*\circ S_*)X\to X\] is an epimorphism, for all
$X\in\Md(\CG)$. The arrow of the adjunction between $Q\circ S^*$
and $S_*\circ R$ is given by
\[Q\mu_{RC}:(Q\circ S^*)\circ(S_*\circ R)C\to C\hbox{ for all
}C\in\CC.\] Clearly $S_*\circ R$ is fully faithful, exactly if
$Q\mu_{RC}$ is an isomorphism for all $C\in\CC$, or equivalently
$\mu_X$ has torsion kernel for all $\CL$-closed $X\in\Md(\CG)$. On
the other hand (ii) is equivalent to the fact $\ker\mu_X\in\CL$
for all $X\in\Md(\CG)$, therefore (ii)$\Leftrightarrow$(i)
follows.

(i)$\Rightarrow$(iii) As we have seen, $\mu_X\in\CL$ for all
$\CL$-closed $X\in\Md(\CG)$. In particular $\ker\mu_G\in\CL$ for
all $G\in\CG$. Applying the exact functor $Q$ to the exact
sequence
\[0\to\ker\mu_G\to(S^*\circ S_*)G\overset{\mu_G}\to G\to0,\] we
obtain the desired isomorphism.

(iii)$\Rightarrow$(ii) For an arbitrary module $X\in\Md(\CG)$,
there is an exact sequence in $\Md(\CG)$
\[0\to Y\to\bigoplus G_i\to X\to 0.\] We apply the colimits
preserving functor $S^*\circ S_*$ (see Lemma \ref{lradj}), and the
Ker-Coker lemma for the obtained diagram shows that $\ker\mu_X$ is
a quotient of the $\CL$-torsion module $\bigoplus\ker\mu_{G_i}$,
therefore it is also $\CL$-torsion.
\end{proof}

\begin{lemma}\label{cepi} Suppose that $G$ is a $\CL$-closed
module, for all $G\in\CG$ and $S$ is surjective on objects. If
$S_*\circ R$ is fully faithful, then $S$ is a $\CL$-conditioned
epimorphism.
\end{lemma}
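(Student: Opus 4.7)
The plan is to argue at the level of the induced colimit-preserving extensions $F^*, F'^*:\Md(\CG)\to\CA$ and then evaluate on representables. So suppose $F,F':\CG\to\CA$ are both generalized $\CL$-closed and satisfy $F\circ S=F'\circ S$. First I would note that since $S^*$ and $F^*$ are each colimit preserving and $F^*\circ S^*$ agrees with $F\circ S$ on representables of $\CU$, the universal property of the induction functor gives $(F\circ S)^*\cong F^*\circ S^*$, and similarly for $F'$. The hypothesis $FS=F'S$ therefore promotes to a natural isomorphism $F^*\circ S^*\cong F'^*\circ S^*$. Post-composing with $S_*$ produces $F^*\circ S^*\circ S_*\cong F'^*\circ S^*\circ S_*$.

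The next step is to simplify $F^*\circ S^*\circ S_*$ back to $F^*$. Here I invoke both hypotheses: because $F$ is generalized $\CL$-closed, Proposition~\ref{gencl} gives a factorization $F^*\cong F^\star\circ Q$; because $S_*\circ R$ is fully faithful and every $G\in\CG$ is $\CL$-closed, Lemma~\ref{qss} yields the natural isomorphism $Q\circ S^*\circ S_*\cong Q$. Chaining these,
\[
F^*\circ S^*\circ S_*\;\cong\;F^\star\circ Q\circ S^*\circ S_*\;\cong\;F^\star\circ Q\;\cong\;F^*,
\]
and the analogous identity holds for $F'$. Combining with the isomorphism from the previous paragraph gives $F^*\cong F'^*$.

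Finally, evaluating this isomorphism on the representable $\CG(-,G)$ for each $G\in\CG$ gives $FG\cong F'G$ naturally, and naturality on morphisms in $\CG$ recovers $F\cong F'$ as functors, which is the conclusion defining a $\CL$-conditioned epimorphism. The only step requiring real care is the middle one, where one must use the $\CL$-closedness of $F$ precisely to push $S^*\circ S_*$ past $F^*$ via the quotient functor $Q$; without the factorization through $Q$ one could not cancel $S^*\circ S_*$, since in general it is not isomorphic to the identity. Everything else is an application of the already-established Proposition~\ref{gencl} and Lemma~\ref{qss}.
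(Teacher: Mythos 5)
There is a genuine gap, and it sits in your very first sentence: you ``suppose $F,F':\CG\to\CA$ are \emph{both} generalized $\CL$-closed,'' but the paper's definition of a $\CL$-conditioned epimorphism imposes the closedness hypothesis on $F$ only; $F'$ is an arbitrary functor into a cocomplete abelian category. Your argument uses the closedness of $F'$ at exactly one point, and it is the load-bearing one: to cancel $S^*\circ S_*$ past $F'^*$ you invoke ``the analogous identity'' $F'^*\circ S^*\circ S_*\cong F'^*$, which requires the factorization $F'^*\cong F'^\star\circ Q$ of Proposition \ref{gencl}(iii) --- unavailable for a non-closed $F'$. Without it, the most one can extract is this: the counit $\mu_X:(S^*\circ S_*)X\to X$ is an epimorphism (because $S_*$ is faithful, Lemma \ref{sisff}), so right exactness of $F'^*$ gives only a natural \emph{epimorphism} $F^*\cong F'^*\circ S^*\circ S_*\twoheadrightarrow F'^*$, not an isomorphism; $F'^*$ has no reason to invert $\mu$, since it need not annihilate the torsion kernel $\ker\mu_X$. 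The asymmetry is not cosmetic: in the proof of Theorem \ref{condepi}, implication (iii)$\Rightarrow$(i), the cancellation property is applied following \cite[Lemma 5]{KE} to a pair of functors of which only one (namely $X\opp\oplus Y\opp$) is known to be generalized $\CL$-closed, so the symmetric statement you actually prove would not support the theorem.

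The paper circumvents this by working with the \emph{right} adjoints rather than the inductions. From $F\circ S=F'\circ S$ one gets $S_*\circ F_*\cong S_*\circ F'_*$; applying $Q\circ S^*$ and Lemma \ref{qss} yields $Q\circ F_*\cong Q\circ F'_*$; the closedness of $F$ alone gives $F_*\cong R\circ Q\circ F_*\cong R\circ Q\circ F'_*$; and then --- this is the step your route has no analogue of --- the unit $\id_{\Md(\CG)}\to R\circ Q$, which exists for \emph{any} $F'$, supplies a canonical natural map $F'_*A\to(R\circ Q\circ F'_*)A\cong F_*A$ that becomes an isomorphism after applying $S_*$; since $S_*$ reflects isomorphisms (Lemma \ref{sisff}), $F'_*\cong F_*$, whence $F\cong F'$ by Yoneda. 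A secondary inaccuracy: you end with ``$F\cong F'$, which is the conclusion defining a $\CL$-conditioned epimorphism,'' but the definition demands the equality $F=F'$; the paper closes this by observing that $F$ and $F'$ already coincide on objects, $S$ being surjective on objects. Your first two reductions (promoting $FS=F'S$ to $F^*S^*\cong F'^*S^*$, and cancelling $S^*\circ S_*$ past $F^*$ via Proposition \ref{gencl} and Lemma \ref{qss}) are fine; to repair the proof you must replace the second half by an argument, such as the paper's adjoint-side one, that treats $F'$ without any closedness assumption.
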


\begin{proof}
Let $F,F':\CG\to\CA$ two functors into a cocomplete, abelian
category, such that $F$ is generalized $\CL$-closed, and $F\circ
S=F'\circ S$. Then we obtain in turn the following natural
isomorphisms: $S_*\circ F_*\cong S_*\circ F'_*$, so $Q\circ
S^*\circ S_*\circ F_*\cong Q\circ S^*\circ S_*\circ F'_*$ and
$Q\circ F_*\cong Q\circ F'_*$ by Lemma \ref{qss}; further $R\circ
Q\circ F_*\cong R\circ Q\circ F'_*$, so $F_*\cong R\circ Q\circ
F'_*$, since $F$ is generalized $\CL$-closed, equivalently $F_*$
factors through $R$ by Proposition \ref{gencl}. From the arrow of
adjunction $\id_{\Md(\CG)}\to R\circ Q$, we obtain a natural
morphism \[F'_*A\to(R\circ Q\circ F'_*)A\cong F_*A\hbox{ for all
}A\in\CA,\] which induces the isomorphism $(S_*\circ
F'_*)A\overset{\cong}\to(S_*\circ F_*)A$. Because $S_*$ reflects
isomorphisms, we deduce that the functors $F_*$ and $F'_*$ are
naturally isomorphic. Therefore $F\cong F'$ naturally. But $F$ and
$F'$ coincide on objects, $S$ being surjective on objects. Thus
$F=F'$.
\end{proof}

\begin{theorem}\label{condepi} If $S:\CU\to\CG$ is bijective on
objects and $G$ is a $\CL$-closed module, for all $G\in\CG$, then
the following statements are equivalent:
\begin{itemize} \item[{\rm (i)}] $S_*\circ R$ is full.
\item[{\rm (ii)}] $S_*\circ R$ is fully faithful. \item[{\rm
(iii)}] $S$ is a $\CL$-conditioned epimorphism.
\end{itemize}
\end{theorem}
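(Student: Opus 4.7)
First I would dispose of the easy equivalence $(i)\Leftrightarrow(ii)$: both $R$ (by construction of the quotient $\CC$) and $S_*$ (by Lemma~\ref{sisff}, since bijectivity on objects implies surjectivity) are faithful, so the composite $S_*\circ R$ is faithful; hence any full such functor is automatically fully faithful. The implication $(ii)\Rightarrow(iii)$ is exactly Lemma~\ref{cepi}, so no additional work is required there.

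The substantive content is the implication $(iii)\Rightarrow(ii)$. My plan is to reduce, via the equivalence $(i)\Leftrightarrow(iii)$ of Lemma~\ref{qss}, to showing that $(Q\circ S^*\circ S_*)G\cong QG$ naturally in $G\in\CG$. The natural candidate is the map $Q\mu_G$, where $\mu$ is the counit of the adjunction $(S^*,S_*)$; because $\mu_G$ is epi (as observed in the proof of Lemma~\ref{qss}) and $Q$ is exact, it suffices to prove $\ker\mu_G\in\CL$ for every $G\in\CG$. To extract this from the conditioned-epimorphism hypothesis, I would apply it to the generalized $\CL$-closed functor $F=Q\circ\iota_\CG:\CG\to\CC$ (gen.\ $\CL$-closed by Proposition~\ref{gencl}) together with a second functor $F':\CG\to\CC$ built out of $Q\circ S^*\circ S_*$, arranged so that $F\circ S=F'\circ S$ on the nose. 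The forced equality $F=F'$ would then translate, after unpacking, into the vanishing $Q(\ker\mu_G)=0$.

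The hard part will be pinning down the auxiliary functor $F'$. The naive choice $F'=Q\circ S^*\circ S_*\circ\iota_\CG$ does not work: $F(SU)=QSU$ generally differs from $F'(SU)=QS^*S_*SU$, and the equality of these is precisely what we are trying to prove. The promising approach exploits the natural splitting $S^*S_*SU\cong SU\oplus\ker\mu_{SU}$ provided by the section $s_{SU}=S^*\eta_{\CU(-,U)}$ of $\mu_{SU}$ coming from the triangle identity $\mu S^*\cdot S^*\eta=\id_{S^*}$; relative to this splitting $S^*S_*(Sv)$ is block-diagonal for every $v\in\CU(U,U')$. One is thus led to try $F'(G)=QG$ on objects and $F'(g)=Q\mu_{G'}\circ QS^*S_*(g)\circ Qs_G$ on morphisms, a formula which makes $F\circ S=F'\circ S$ by construction. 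The delicate point, and the technical heart of the argument, is to verify functoriality of such an $F'$ on \emph{arbitrary} morphisms of $\CG$ (not merely on those in the image of $S$, where the section is manifestly natural); doing this will require leveraging both the bijectivity of $S$ on objects and the hypothesis that $G$ is $\CL$-closed for every $G\in\CG$, possibly by replacing the target $\CC$ with a better-suited cocomplete abelian category $\CA$ or by passing through Proposition~\ref{gencl} at the level of the induced functor.
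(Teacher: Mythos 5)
Your handling of (i)$\Leftrightarrow$(ii) and of (ii)$\Rightarrow$(iii) is correct and coincides with the paper's (faithfulness of $S_*$ and $R$, plus Lemma \ref{cepi}). The gap is in (iii)$\Rightarrow$(ii), and it is fatal as written: the auxiliary functor $F'$ you propose is identically equal to $F$. The counit $\mu:S^*\circ S_*\to\id_{\Md(\CG)}$ is natural with respect to \emph{all} morphisms of $\Md(\CG)$, in particular with respect to every $g:G\to G'$ in $\CG$ (viewed via Yoneda), so $\mu_{G'}\circ S^*S_*(g)=g\circ\mu_G$, and hence
\[F'(g)=Q\mu_{G'}\circ QS^*S_*(g)\circ Qs_G=Qg\circ Q(\mu_G\circ s_G)=Qg=F(g).\]
So the ``delicate point'' you flag, functoriality of $F'$ on morphisms outside the image of $S$, is not where the trouble lies --- functoriality is automatic because $F'=F$ on the nose. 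The real problem is that the construction is vacuous: applying the $\CL$-conditioned epimorphism hypothesis to the pair $(F,F')=(F,F)$ is a tautology and yields no control over $\ker\mu_G$, so you never reach $Q(\ker\mu_G)=0$. Moreover, any variant that ``compresses'' $QS^*S_*(g)$ between the section $s_G$ and the retraction $\mu_{G'}$ collapses the same way, since the collapse is forced by the naturality of $\mu$; your (correct) block-triangularity observations cannot rescue a formula of this shape.

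The structural reason the plan misfires is that a conditioned epimorphism is a cancellation property for \emph{pairs} of functors, so to extract content the second functor must carry genuinely new data that agrees with the first only after restriction along $S$. The paper does exactly this, and in the opposite logical direction from your reduction: rather than passing through Lemma \ref{qss} and the torsion-kernel condition, it proves (iii)$\Rightarrow$(i) directly, showing that $\Hom_\CG(X,Y)\to\Hom_\CU(S_*X,S_*Y)$ is surjective for all $\CL$-closed $X,Y$, via the conjugation trick of \cite[Lemma 5]{KE}: given a $\CU$-linear $\phi:S_*X\to S_*Y$, one takes $F=X\opp\oplus Y\opp:\CG\to\Ab\opp$ --- generalized $\CL$-closed by Remark \ref{term}, because $X$ and $Y$ are $\CL$-closed --- and its conjugate $F'$ by the unipotent automorphism of $X\opp\oplus Y\opp$ determined by $\phi$; then $F\circ S=F'\circ S$ precisely because $\phi$ is $\CU$-natural, and the forced equality $F=F'$ says that $\phi$ is $\CG$-natural, i.e., lies in the image of $S_*$ (here bijectivity of $S$ on objects guarantees $\phi$ has components at every object of $\CG$). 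To repair your proof, replace the kernel-chasing by this encoding of a morphism into a pair of functors and conclude fullness of $S_*\circ R$, then invoke your own (i)$\Leftrightarrow$(ii); as it stands, the heart of the implication is missing.
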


\begin{proof} (i)$\Leftrightarrow$(ii) is immediate, since both $S_*$
and $R$ are known to be faithful and (ii)$\Rightarrow$(iii)
follows by Lemma \ref{cepi}.

For the implication (iii)$\Rightarrow$(i), we have to show that
the abelian group homomorphism
\[\Hom_\CG(X,Y)\to\Hom_\CU(S_*X,S_*Y)\]
induced by $S_*$ is surjective for all $\CL$-closed
$X,Y\in\Md(\CG)$. In order to do this, we use the argument of
\cite[Lemma 5]{KE}, observing in addition that the functor
$F:\CG\to\Ab\opp$, given by $F=X\opp\oplus Y\opp$ is generalized
$\CL$-closed.
\end{proof}

\section{When the restriction functor is fully
faithful}\label{rfff}

Let $T:\CU\to\CC$ be any (additive) functor, where $\CU$ and $\CC$
are two arbitrary (preadditive) categories. Following \cite{KE},
the functor $T$ has a canonical factorization $T=I\circ S$, where
$S:\CU\to\CG$ is bijective on objects and $I:\CG\to\CC$ is fully
faithful. Moreover, this factorization is unique up to an
isomorphism of categories. Actually the objects of $\CG$ are the
same as the objects of $\CU$ and $\CG(U',U)=\CC(TU',TU)$, for all
$U',U\in\CU$. The functor $S$ is the identity on objects and
$Su=Tu$ for all maps $u:U'\to U$ in $\CU$. The functor $I$ is the
identity on maps and $IU=TU$, for all $U\in\CU$ (see \cite[Lemma
1]{KE}). Observe that, if $T(\CU)$ is the full subcategory of
$\CC$ consisting of those objects of the form $T(U)$ with
$U\in\CU$, then the categories $\CG$ and $T(\CU)$ are equivalent.
Indeed, if $\CU\overset{T'}\to T(\CU)\overset{T''}\to\CC$ is the
factorization of $T$ through its image, then
$\CU\overset{S}\to\CG\overset{I''}\to T(\CU)$ is the canonical
factorization of $T'$, where $I''$ is the identity on maps and
$I''U=T'U$ for all $U\in\CU$.  By construction $T'$ is surjective
on objects, so we deduce that $I''$ is an equivalence.

Assume now that the category $\CC$ is abelian, AB5. The canonical
factorization $\CU\overset{S}\to\CG\overset{I}\to\CC$ of $T$
induces a diagram of categories and functors \[\diagram
\Md(\CU)\rrto<0.5mm>^{T^*}\drto<0.5mm>^{S^*}&&{\hskip2mm\CC\hskip5mm}\llto<0.5mm>^{T_*}\dlto<0.5mm>^{I_*}\\
&\Md(\CG)\ulto<0.5mm>^{S_*}\urto<0.5mm>^{I^*}&\\
\enddiagram\]
in which we have obviously $T^*\cong I^*\circ S^*$ and $T_*\cong
S_*\circ I_*$ naturally.

In this Section we consider a functor $T:\CU\to\CC$ defined on a
ring with several objects with values in an abelian, AB5 category
$\CC$, together with its canonical factorization
$\CU\overset{S}\to\CG\overset{I}\to\CC$. Consider also the adjoint
pair $(T^*,T_*)$ induced by $T$.

\begin{lemma}\label{tf} With the above notations the following are
equivalent:
\begin{itemize}
\item[{\rm(i)}] The functor $T_*$ is faithful. \item[{\rm(ii)}]
The functor $I$ identifies $\CG$ with a generating, small
subcategory of $\CC$. \item[{\rm(iii)}] $T(\CU)$ is a generating
subcategory of $\CC$.
\end{itemize}
Moreover if one, since all, of these conditions holds, then the
category $\CC$ is Grothendieck, and the adjoint pair $(I^*,I_*)$
is a localization. Consequently $\Ker I^*$ is a localizing
subcategory of $\Md(\CG)$.
\end{lemma}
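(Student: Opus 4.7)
The plan is to handle the three equivalences first and then address the concluding assertions about $\CC$, the pair $(I^*,I_*)$, and $\Ker I^*$.

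For (i)$\Leftrightarrow$(iii), I would simply unfold definitions. Since $T_*C=\CC(T-,C)$ is additive, $T_*$ is faithful if and only if $T_*f=0$ forces $f=0$ for every morphism $f$ of $\CC$. But $T_*f=0$ is precisely the condition that $f\circ h=0$ for every $h\colon TU\to C$ and every $U\in\CU$, and requiring this to imply $f=0$ is exactly the defining property of the family $\{TU\mid U\in\CU\}$ being a generating family in $\CC$, i.e.\ of $T(\CU)$ being a generating subcategory. For (ii)$\Leftrightarrow$(iii), I would invoke the discussion preceding the lemma: by construction $I$ is fully faithful, and the equivalence $I''\colon\CG\to T(\CU)$ identifies $\CG$ with the full subcategory $T(\CU)\subseteq\CC$. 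Both categories are small (same objects as $\CU$), so one is a generating subcategory of $\CC$ exactly when the other is.

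For the moreover part, condition (iii) produces a small generating family inside the AB5 abelian category $\CC$, which by definition makes $\CC$ a Grothendieck category. To show that $(I^*,I_*)$ is an abelian localization I need two things: $I_*$ is fully faithful and $I^*$ is exact. This is the content of the Gabriel--Popescu theorem in the several--object version: whenever $I\colon\CG\to\CC$ is a fully faithful functor from a small preadditive category into a Grothendieck category whose image is a generating subcategory, the functor $I_*=\CC(I-,-)\colon\CC\to\Md(\CG)$ is fully faithful and its colimit--preserving left Kan extension $I^*$ (which coincides with the already--constructed left adjoint) is exact. Concretely, the counit $I^*I_*C\to C$ can be written as the canonical map $\colim IG_i\to C$ where the colimit runs over all pairs $(G_i,IG_i\to C)$, and the generating property upgrades this canonical map from an epimorphism to an isomorphism. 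Once $I^*$ is known to be exact and cocontinuous, the subcategory $\Ker I^*$ is automatically closed under subobjects, quotients, direct sums and extensions, which is the definition of a localizing subcategory of $\Md(\CG)$.

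The only nontrivial step is the localization claim, since (i)$\Leftrightarrow$(iii) and (ii)$\Leftrightarrow$(iii) are essentially bookkeeping and the Grothendieck property and the localizing character of $\Ker I^*$ follow formally; I expect the main obstacle to be arranging a clean reference to (or short proof of) the Gabriel--Popescu theorem in the several--object setting, which is exactly the tool matching our $I\colon\CG\to\CC$ and $I_*=\CC(I-,-)$.
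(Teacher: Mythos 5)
Your proof is correct, and it differs from the paper's in one structural point worth noting. The paper proves (i)$\Leftrightarrow$(ii) by exploiting the factorization $T_*\cong S_*\circ I_*$: for (i)$\Rightarrow$(ii), $I_*\gamma=0$ implies $T_*\gamma=0$, so faithfulness of $T_*$ gives faithfulness of $I_*$; for (ii)$\Rightarrow$(i) it invokes Lemma \ref{sisff} ($S_*$ is faithful since $S$ is surjective on objects); then (ii)$\Leftrightarrow$(iii) is, as in your argument, the equivalence $\CG\simeq T(\CU)$ from the discussion preceding the lemma. You instead prove (i)$\Leftrightarrow$(iii) by pure definition-unfolding: $T_*$ faithful iff $T_*f=0\Rightarrow f=0$ (using additivity), iff the family $\{TU\mid U\in\CU\}$ has jointly faithful hom functors, which is exactly the paper's working notion of generating (the paper itself reads faithfulness of $I_*$ as "meaning precisely" that $\CG$ generates). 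Your route is slightly more elementary, since that equivalence becomes independent of the canonical factorization and of Lemma \ref{sisff}; the paper's route has the advantage of directly isolating faithfulness of $I_*$, which is the exact form of hypothesis fed into Gabriel--Popescu, and of rehearsing the factorization $T_*\cong S_*\circ I_*$ that the rest of the paper uses. For the moreover part, both you and the paper simply cite the Gabriel--Popescu theorem; note only that your parenthetical sketch (that the generating property "upgrades" the counit $I^*I_*C\to C$ from an epimorphism to an isomorphism) understates the theorem: generating yields the epimorphism cheaply, whereas the isomorphism (i.e.\ full faithfulness of $I_*$) and the exactness of $I^*$ are the actual content of Gabriel--Popescu. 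Since you explicitly defer to the theorem as a citation, exactly as the paper does, nothing is missing, and your closing observation that $\Ker I^*$ is localizing because $I^*$ is exact and colimit preserving matches the remark made in the paper's introduction.
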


\begin{proof}
(i)$\Rightarrow$(ii). The functor $I$ is fully faithful by
construction, so it identifies $\CG$ with a (small) full
subcategory of $\CC$. Let $\gamma$ be a map in $\CC$ such that
$I_*\gamma=0$. Then $T_*\gamma=(S_*\circ I_*)\gamma=0$, so
$\gamma=0$ since $T_*$ is faithful. It follows that $I_*$ is
faithful, meaning precisely that $\CG$ is a small generating
subcategory of $\CC$. Therefore $\CC$ is Grothendieck, and
$(I^*,I_*)$ is a localization by Gabriel--Popescu theorem.

(ii)$\Rightarrow$(i). The condition (ii) is equivalent to the fact
that $I_*$ is faithful. But $S_*$ is also faithful, by Lemma
\ref{sisff}, so the same is true for $T_*\cong S_*\circ I_*$.

The equivalence (ii)$\Leftrightarrow$(iii) follows by the above
observation, that the categories $\CG$ and $T(\CU)$ are
equivalent.
\end{proof}

Now we are in position to prove the main result of this work:

\begin{theorem}\label{resff} The functor $T$ is a generalized lax
epimorphism if and only if the following conditions hold true:
\begin{itemize}
\item[{\rm (1)}] $\CG$ generates $\CC$; consequently $\Ker I^*$ is
a localizing subcategory of $\Md(\CG)$, and $\CC$ is a
Grothendieck category. \item[{\rm (2)}] $S$ is a $\Ker
I^*$-conditioned epimorphism.
\end{itemize}
\end{theorem}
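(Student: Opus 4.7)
The plan is to reduce to the previous results via the canonical factorization $T = I \circ S$, which gives $T_* \cong S_* \circ I_*$. Since $I$ is fully faithful by construction, full faithfulness of $T_*$ is essentially a joint condition on $I_*$ and $S_*$, and we have two results tailored to exactly this decomposition: Lemma \ref{tf} characterizes when $(I^*, I_*)$ is an abelian localization, and Theorem \ref{condepi} characterizes when $S_* \circ R$ is fully faithful in the presence of such a localization.

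For the forward direction, assume $T$ is a generalized lax epimorphism, so $T_*$ is fully faithful and in particular faithful. By Lemma \ref{tf}, $T(\CU)$ is a generating subcategory of $\CC$, $\CC$ is Grothendieck, and $(I^*, I_*)$ is an abelian localization; this is exactly condition (1), with $\CL = \Ker I^*$ the resulting localizing subcategory of $\Md(\CG)$. Under this localization, $I_*$ plays the role of $R$ (identifying $\CC$ with $\Md(\CG)/\CL$) and $I^*$ the role of $Q$. Each $G \in \CG$ is $\CL$-closed: using that $I$ is fully faithful together with the Yoneda identification, $G = \CG(-,G) \cong \CC(I-,IG) = I_*(IG)$ lies in the essential image of $I_* = R$. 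Since $S$ is bijective on objects, Theorem \ref{condepi} applies, and the full faithfulness of $S_* \circ R = S_* \circ I_* \cong T_*$ translates into (2): $S$ is a $\Ker I^*$-conditioned epimorphism.

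For the reverse direction, assume (1) and (2). Condition (1) together with Lemma \ref{tf} gives that $(I^*, I_*)$ is an abelian localization with $\CL = \Ker I^*$, so once again we have the identifications $R = I_*$, $Q = I^*$, and each $G \in \CG$ is $\CL$-closed by the argument above. Since $S$ is bijective on objects, Theorem \ref{condepi} applied to $S$ and $\CL$ asserts the equivalence of (2) with the full faithfulness of $S_* \circ R = S_* \circ I_* \cong T_*$, which is precisely the statement that $T$ is a generalized lax epimorphism.

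The main obstacle—really the only substantive verification beyond bookkeeping—is confirming that each $G \in \CG$ is $\CL$-closed after condition (1) is established; this is immediate once one unpacks the Yoneda identification and uses full faithfulness of $I$, but it is the bridge that legitimately allows Theorem \ref{condepi} to be invoked in both directions. Everything else amounts to matching the canonical factorization $T = I \circ S$ to the setups of Lemma \ref{tf} and Theorem \ref{condepi}.
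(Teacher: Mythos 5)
Your proof is correct and takes essentially the same route as the paper: Lemma \ref{tf} supplies condition (1) together with the localization $(I^*,I_*)$, the Yoneda computation $\CG(-,G)\cong\CC(I-,IG)=I_*(IG)$ shows each $G\in\CG$ is $\Ker I^*$-closed, and Theorem \ref{condepi} (with $R=I_*$ and $S$ bijective on objects) then gives the equivalence with condition (2). You even isolate the same key verification the paper highlights, namely the $\CL$-closedness of the objects of $\CG$, so there is nothing to add.
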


\begin{proof} Provided that $\CG$ generates $\CC$ (therefore $\Ker I^*$ is
a localizing subcategory of $\Md(\CG)$), we shall show that every
$G\in\CG$ is a $\Ker I^*$-closed $\CG$-module, in order to verify
the hypotheses of Theorem \ref{condepi}. But $I$ is fully faithful
by construction, thus we have the isomorphisms:
\[\CG(-,G)\cong\CC(I-,IG)=I_*(IG)=(I_*\circ I^*)G,\] for every $G\in\CG$, proving our claim.
Now we have only to combine Theorem \ref{condepi} and Lemma
\ref{tf}.
\end{proof}

Provided that $\CC$ is a Grothendieck category, we say that
$T:\CU\to\CC$ satisfies the {\em Ulmer's criterion of flatness},
if for every (finite) set of morphisms $u_i:U_i\to U$ in $\CU$,
with $1\leq i\leq n$, there are objects $V_j\in\CU$, with $j\in
J$, and morphisms $u_{ij}:V_j\to U_i$, with $i\in\{1,\ldots,n\}$
and $j\in J$, such that for each $j$ we have
$\sum_{i=1}^nu_iu_{ij}=0$ and the sequence
\[\bigoplus_{j\in
J}TV_j\overset{(Tu_{ij})}\longrightarrow\bigoplus_{i=1}^n
TU_i\overset{(Tu_i)}\longrightarrow TU\] is exact. By
\cite[Theorem]{U} we learned that the induced functor
\[T^*:\Md(\CU)\to\CC\] is exact if and only if $T$ satisfies the
Ulmer's criterion of flatness.

For a morphism $u:V\to U$ in $\CU$ and a submodule $X\leq U$ we
denote by $(X:u)\leq V$ the inverse image of $X$ through $u$.
Recall from \cite{GG} that a {\em(right) Gabriel filter} on a a
ring with several objects $\CU$ is a family $\FF=\{\FF_U\mid
U\in\CU\}$, where each $\FF_U$ is a set of subobjects of $U$
satisfying:
\begin{itemize}
\item[{\rm GF1.}] $U\in\FF_U$ for all $U\in\CU$. \item[{\rm GF2.}]
For every morphism $u:V\to U$ in $\CU$ and every $X\in\FF_U$ it
holds $(X:u)\in\FF_V$. \item[{\rm GF3.}] If $U\in\CU$, then a
submodule $X\leq U$ belongs to $\FF_U$, whenever there exists
$Y\in\FF_U$ with the property $(X:u)\in\FF_V$ for any morphism
$u:V\to U$ with $\ima u\leq Y$.
\end{itemize}
We know that, for every $U\in\CU$, $\FF_U$ is a filter on the
lattice of submodules of $U$ (that is $X,Y\in\FF_U\Rightarrow
X\cap Y\in\FF_U$ and $X\in\FF_U,Y\leq U,X\leq Y\Rightarrow
Y\in\FF_U$). Moreover there is a bijection between localizing
subcategories of $\Md(\CU)$ and Gabriel filters on $\CU$, given by
$\CL\mapsto\FF(\CL)$ for any localizing subcategory $\CL$ of
$\Md(\CU)$, where:
\[\FF(\CL)_U=\{X\leq U\mid U/X\in\CL\},\hbox{ for all }U\in\CU.\]
(For details concerning Gabriel filters on rings with several
objects see \cite[Section 2.1]{GG}).

\begin{corollary}\label{abloc} The adjoint pair
$(T^*,T_*)$ induced by $T$ is an abelian localization if and only
if the following conditions hold:
\begin{itemize} \item[{\rm (1)}] $\CG$ generates
$\CC$; consequently $\CC$ is a Grothendieck category. \item[{\rm
(2)}] $S$ is a $\Ker I^*$-conditioned epimorphism. \item[{\rm
(3)}] $T$ satisfies the Ulmer's criterion of flatness.
\end{itemize}
Moreover if these conditions are satisfied, then $\CC$ is the
quotient of $\Md(\CU)$ modulo the localizing subcategory
corresponding to the Gabriel filter $\FF$ in $\CU$, where
\[\FF_U=\{X\leq U\mid T^*X\cong TU\hbox{ naturally}\},\]
for all $U\in\CU$.
\end{corollary}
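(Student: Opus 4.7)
The plan is to decompose the claimed characterization into the two defining components of an abelian localization---exactness of the left adjoint $T^*$ and full faithfulness of the right adjoint $T_*$---and match each component to an already-established result. By definition, $(T^*,T_*)$ is an abelian localization precisely when both properties hold.

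The first step is to observe that full faithfulness of $T_*$ is by definition the statement that $T$ is a generalized lax epimorphism, so Theorem \ref{resff} supplies the equivalence of this property with conditions (1) and (2). The second step is to invoke Ulmer's theorem (recalled as \cite[Theorem]{U} just before the corollary), which identifies exactness of $T^*$ with condition (3). Combining the two equivalences yields the three-fold equivalence asserted in the statement.

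For the ``moreover'' clause I would assume (1)--(3) and argue as follows. Since $T^*$ is exact and colimit-preserving, $\CL:=\Ker T^*$ is a localizing subcategory of $\Md(\CU)$, and $T^*$ annihilates $\CL$; therefore $T^*$ factors uniquely through the Gabriel quotient as $T^*=\overline{T}^*\circ Q_\CU$, where $Q_\CU:\Md(\CU)\to\Md(\CU)/\CL$ has fully faithful right adjoint $R_\CU$. The resulting right adjoint $\overline{T}_*$ of $\overline{T}^*$ satisfies $R_\CU\circ\overline{T}_*\cong T_*$. Full faithfulness of both $R_\CU$ and $T_*$ forces $\overline{T}_*$ to be fully faithful, and since the essential images of $R_\CU$ and $T_*$ coincide (both equal to the $\CL$-closed modules), $\overline{T}_*$ is essentially surjective, hence an equivalence; so $\CC\cong\Md(\CU)/\CL$.

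It remains to identify the Gabriel filter corresponding to $\CL$. By the bijection recalled in the text, $\FF(\CL)_U=\{X\le U\mid U/X\in\CL\}$. Applying the exact functor $T^*$ to the short exact sequence $0\to X\to U\to U/X\to 0$ produces $0\to T^*X\to TU\to T^*(U/X)\to 0$; hence $U/X\in\CL$ precisely when the canonical map $T^*X\to TU$ is an isomorphism, matching the description of $\FF$ in the statement. I do not expect a serious obstacle: the only delicate point is the identification of the essential image of $T_*$ with the $\CL$-closed modules, but this is a standard feature of abelian localizations and is already implicit in the Gabriel--Popescu setup invoked in Lemma \ref{tf}.
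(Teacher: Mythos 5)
Your proposal is correct and follows essentially the same route as the paper: the published proof likewise splits the localization property into full faithfulness of $T_*$ (via Theorem \ref{resff}/Theorem \ref{condepi}) plus exactness of $T^*$ (Ulmer's criterion), and obtains the ``moreover'' clause from the equivalence $\CC\simeq\Md(\CU)/\Ker T^*$ together with applying the exact functor $T^*$ to $0\to X\to U\to U/X\to 0$. The only difference is one of detail: the paper simply asserts the standard fact that an abelian localization with exact left adjoint identifies $\CC$ with $\Md(\CU)/\Ker T^*$, whereas you spell out the verification (the essential image of $T_*$ being the $\Ker T^*$-closed modules), which is a correct elaboration rather than a different argument.
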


\begin{proof} The necessity and sufficiency of conditions (1), (2) and (3)
in order to derive that $T$ induces an abelian localization is an
immediate consequence of Theorem \ref{condepi} combined with the
Ulmer's criterion of flatness. For the last remaining statement,
observe that $\CC$ is equivalent to $\Md(\CU)/\Ker T^*$, provided
that $(T^*,T_*)$ is a localization. But, for every submodule
$X\leq U$ we have $U/X\in\Ker T^*$ exactly if $T^*X\cong TU$.
\end{proof}

\begin{corollary}\label{relative} Let $P:\CU\to\CU'$ be a morphism of rings with several objects,
and let $\CL'$ be a localizing subcategory of $\Md(\CU')$. We
consider the canonical factorization
$\CU\overset{S}\to\CG\overset{I}\to\Md(\CU')/\CL'$ of the functor
$T=T_{P,\CL'}:\CU\to\Md(\CU')/\CL'$ given by $TU=Q'(PU)$, for all
$U\in\CU$, where $Q':\Md(\CU')\to\Md(\CU')/\CL'$ denotes the
quotient functor. Then the functor $P$ induces an equivalence
$\Md(\CU)/\CL\to\Md(\CU')/\CL'$, for some localizing subcategory
$\CL$ of $\Md(\CU)$, if and only if $\CG$ generates
$\Md(\CU')/\CL'$, $S$ is a $\Ker I^*$-conditioned epimorphism and
$T$ satisfies the Ulmer's criterion of flatness. If this is the
case, then we have also
\[\CL=\{X\in\Md(\CU)\mid P^*X\in\CL'\}.\]
\end{corollary}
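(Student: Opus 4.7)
The plan is to recognize this as a direct application of Corollary \ref{abloc} to the functor $T=T_{P,\CL'}:\CU\to\Md(\CU')/\CL'$, once the meaning of ``$P$ induces an equivalence'' has been translated into a statement about the adjoint pair $(T^*,T_*)$.

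First I would check that the colimit preserving functor $T^*:\Md(\CU)\to\Md(\CU')/\CL'$ associated to $T$ is naturally isomorphic to $Q'\circ P^*$. Both composites are colimit preserving and coincide on representables (sending $\CU(-,U)$ to $Q'(PU)=TU$), so by the universal property of $\Md(\CU)$ we get $T^*\cong Q'\circ P^*$, and dually $T_*=(Q'\circ P^*)_*\cong P_*\circ R'$, where $R'$ is the right adjoint of $Q'$.

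For the ``if'' direction I would assume the three conditions on $T$, apply Corollary \ref{abloc} to conclude that $(T^*,T_*)$ is an abelian localization and that $\Md(\CU')/\CL'\simeq\Md(\CU)/\Ker T^*$; then take $\CL=\Ker T^*$. Since $T^*\cong Q'\circ P^*$, we have
\[\CL=\Ker T^*=\{X\in\Md(\CU)\mid Q'(P^*X)=0\}=\{X\in\Md(\CU)\mid P^*X\in\CL'\},\]
which gives both the existence of $\CL$ and its explicit description. The equivalence $\Md(\CU)/\CL\to\Md(\CU')/\CL'$ produced this way is induced by $P$ in the sense that its composite with $Q:\Md(\CU)\to\Md(\CU)/\CL$ is $T^*\cong Q'\circ P^*$.

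For the converse, I would assume that, for some localizing subcategory $\CL\subseteq\Md(\CU)$, the functor $Q'\circ P^*$ factors through $Q$ as $E\circ Q$ with $E:\Md(\CU)/\CL\to\Md(\CU')/\CL'$ an equivalence. Then $T^*\cong Q'\circ P^*\cong E\circ Q$ is the composite of an exact functor with an equivalence, hence exact; and $T_*\cong Q_*\circ E^{-1}$ (after identifying $E$ with its adjoint equivalence) is a composite of fully faithful functors, hence fully faithful. So $(T^*,T_*)$ is an abelian localization, and the three conditions follow from Corollary \ref{abloc}. The equality $\CL=\{X\mid P^*X\in\CL'\}$ is forced, since $\CL=\Ker Q=\Ker(E\circ Q)=\Ker T^*$.

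The main obstacle, and the only thing that really needs care, is the bookkeeping identification $T^*\cong Q'\circ P^*$ together with the corresponding statement about right adjoints; once this is in place, both implications reduce cleanly to Corollary \ref{abloc}. Everything else is formal manipulation of adjoints and kernels.
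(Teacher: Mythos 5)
Your proposal is correct and follows essentially the same route as the paper: the paper's (much terser) proof likewise sets $\CL=\Ker T^*$, observes that ``$P$ induces an equivalence'' is equivalent to ``$T$ induces an abelian localization'', and invokes Corollary \ref{abloc} together with the identification $\Ker T^*=\{X\in\Md(\CU)\mid P^*X\in\CL'\}$. Your write-up merely makes explicit the bookkeeping the paper leaves implicit, namely $T^*\cong Q'\circ P^*$ via uniqueness of colimit-preserving extensions and the corresponding factorization $E\circ Q$ of $T^*$ in both directions.
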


\begin{proof} Denoting $\CL=\Ker T^*$, the functor
$P$ induces an equivalence of categories as stated if and only if
$T$ induces an abelian localization, therefore Corollary
\ref{abloc} applies. Moreover if this the case,
\[\Ker T^*=\{X\in\Md(\CU)\mid P^*X\in\CL'\}.\]
\end{proof}

\begin{remark}\label{old} {\rm Corollary \ref{relative} gives necessary and sufficient
conditions for a morphism of two rings with several objects $\CU$
and $\CU'$ to induce an equivalence at the level of two
localizations of $\Md(\CU)$ and $\Md(\CU')$, respectively. In this
sense it is an additive version of \cite[Theorem 4.1]{SGA4} (see
also \cite[Corollary 4.5]{L}). But it also gives a partial answer
to a question occurring naturally in \cite{CM}: Given two
Grothendieck categories, $\CA$ and $\CB$, a pair of adjoint
functors between them $R:\CA\to\CB$ at the right and $L:\CB\to\CA$
at the left, and a hereditary torsion class $\CT$ in $\CA$, what
additional hypotheses should be considered, such that
$\{B\in\CB\mid LB\in\CT\}$ is a hereditary torsion class?}
\end{remark}

\section{Ordinary epimorphisms of rings with several objects}

In this section we shall see how do our result generalize the
classical case of (flat) epimorphisms of rings (see \cite{M} or
\cite{P}). Observe first that, for the localizing subcategory
$\CL=0$ of a module category $\Md(\CG)$ over a ring with several
objects $\CG$, every $\CG$-module is $0$-closed, and every functor
$F:\Md(\CG)\to\CA$ into a cocomplete, abelian category $\CA$ is
generalized $0$-closed. We shall say that the ring with several
objects $\CG'$ has less objects than the ring with several objects
$\CG$ if the cardinality of isomorphism classes of objects in
$\CG'$ is smaller than the one of objects in $\CG$.

\begin{lemma}\label{small} Consider a morphism
of rings with several objects $S:\CU\to\CG$, which is surjective
on objects. The following are equivalent:
\begin{itemize}
\item[{\rm (i)}] $S$ is a $0$-conditioned epimorphism. \item[{\rm
(ii)}] $S$ is an epimorphism in the category of rings with several
objects. \item[{\rm (iii)}] For every two morphisms of rings with
several objects $F,F':\CG\to\CG'$, where $\CG'$ has less objects
than $\CG$, we have $F\circ S=F'\circ S$ implies $F=F'$.
\end{itemize}
\end{lemma}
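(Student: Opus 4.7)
I would close the cycle (i)$\Rightarrow$(ii)$\Rightarrow$(iii)$\Rightarrow$(i). The link (ii)$\Rightarrow$(iii) is tautological, since (iii) is simply the instance of (ii) in which the codomain is required to have at most as many iso-classes of objects as $\CG$.

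For (i)$\Rightarrow$(ii), given $F,F':\CG\to\CG'$ with $F\circ S=F'\circ S$, I would compose with the Yoneda embedding $y:\CG'\hookrightarrow\Md(\CG')$. This produces two functors into the cocomplete abelian category $\Md(\CG')$ satisfying $(y\circ F)\circ S=(y\circ F')\circ S$. Because $\CL=0$ renders the ``generalized $0$-closed'' hypothesis vacuous (as noted at the start of the section), condition (i) yields $y\circ F=y\circ F'$; full faithfulness of $y$ then gives $F=F'$.

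The substantive implication is (iii)$\Rightarrow$(i). Given $F,F':\CG\to\CA$ into a cocomplete abelian category with $F\circ S=F'\circ S$, I would first observe that $F$ and $F'$ agree on objects: since $S$ is surjective on objects, every $G\in\CG$ equals $SU$ for some $U\in\CU$, so $FG=FSU=F'SU=F'G$. I would then restrict the codomain to the full subcategory $\CG''\subseteq\CA$ spanned by $\{FG\mid G\in\CG\}$; this is a small preadditive category, hence a ring with several objects, whose iso-classes of objects are indexed by a quotient of the objects of $\CG$ and are therefore no more numerous. The functors $F$ and $F'$ factor through the inclusion $\CG''\hookrightarrow\CA$ as $\tilde F,\tilde F':\CG\to\CG''$ with $\tilde F\circ S=\tilde F'\circ S$, and applying (iii) to this pair yields $\tilde F=\tilde F'$, whence $F=F'$.

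The only genuine subtlety is this last size-reduction: one must pass from the potentially large $\CA$ to a small subcategory in order to invoke (iii) legitimately. The remainder is formal, relying only on surjectivity of $S$ on objects and on full faithfulness of the Yoneda embedding.
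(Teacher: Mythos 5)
Your proposal is correct and takes essentially the same route as the paper: the paper likewise treats (i)$\Rightarrow$(ii) and (ii)$\Rightarrow$(iii) as obvious and proves (iii)$\Rightarrow$(i) by noting that $F$ and $F'$ agree on objects (by surjectivity of $S$) and then factoring both through the common full image $\CG'=F(\CG)=F'(\CG)\subseteq\CA$, a small subcategory with no more isomorphism classes of objects than $\CG$. Your Yoneda-embedding argument for (i)$\Rightarrow$(ii) simply makes explicit the step the paper leaves as ``obvious'', and your size-reduction step is exactly the paper's.
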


\begin{proof} (i)$\Rightarrow$(ii) and (ii)$\Rightarrow$(iii) are
obvious.

(iii)$\Rightarrow$(i). Let $F,F':\CG\to\CA$ be two (arbitrary)
functors into a cocomplete, abelian category such that $F\circ
S=F'\circ S$. Since $S$ is surjective in objects, it follows that
$F$ and $F'$ coincide on objects. If we consider
$\CG'=F(\CG)=F'(\CG)$ (considered as a full subcategory of $\CA$),
then $\CG'$ has less objects than $\CG$. It follows $F=F'$ by
applying (iii) to factorizations through image of $F$ and $F'$.
\end{proof}

An immediate consequence of Theorem \ref{resff} and Lemma
\ref{small} is then the following well--known characterization of
epimorphisms of unitary rings:

\begin{proposition}\label{ringepi} Let $A$ and
$B$ be two unitary rings, and let $\varphi:A\to B$ a unitary ring
homomorphism. Then $\varphi$ is an epimorphism in the category of
unitary rings if and only if the restriction functor
\[\varphi_*:\Md(B)\to\Md(A),\ \varphi_*Y=Y\]
is fully faithful.
\end{proposition}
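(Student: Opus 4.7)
The plan is to reduce the proposition to Theorem \ref{resff} combined with Lemma \ref{small}, after identifying the canonical factorization of $\varphi$ in this elementary one-object situation. View $A$ and $B$ as rings with several objects (each having a single object), and let $T:A\to\Md(B)$ be the composition of $\varphi:A\to B$ with the Yoneda embedding, so that $T$ sends the unique object of $A$ to the regular right $B$-module $B$. Write $A\overset{S}\to\CG\overset{I}\to\Md(B)$ for the canonical factorization of $T$.

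Since $S$ is bijective on objects, $\CG$ has a single object whose endomorphism ring is $\Md(B)(B,B)\cong B$; hence $\CG$ may be identified with $B$, $S$ with the original homomorphism $\varphi:A\to B$, and $I$ with the Yoneda embedding $B\hookrightarrow\Md(B)$. The induced colimit-preserving functor $I^*:\Md(B)\to\Md(B)$ extends the Yoneda embedding along itself and is therefore naturally isomorphic to the identity, so $\Ker I^*=0$. In particular $\CG$ generates $\CC=\Md(B)$, and every $B$-module is $\Ker I^*$-closed, so condition (1) of Theorem \ref{resff} holds automatically. Thus Theorem \ref{resff} reduces the fully faithfulness of $\varphi_*=T_*$ (equivalently, the assertion that $T$ is a generalized lax epimorphism) to the statement that $S$ is a $0$-conditioned epimorphism.

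By Lemma \ref{small}, $S$ is a $0$-conditioned epimorphism if and only if it is an epimorphism in the category of rings with several objects; since $A$ and $B$ have a single object, this coincides with being an epimorphism of unitary rings. Stringing the two equivalences together yields the proposition. The only point requiring care is the identification of $I^*$ with the identity and the verification that $\CG\cong B$ generates $\Md(B)$, both of which are immediate once the canonical factorization is made explicit; no genuine obstacle arises, as the real content has already been distilled into Theorem \ref{resff} and Lemma \ref{small}.
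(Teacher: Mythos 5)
Your proposal is correct and follows exactly the paper's route: the author proves Proposition \ref{ringepi} as an ``immediate consequence of Theorem \ref{resff} and Lemma \ref{small}'', which is precisely your reduction, with your explicit identifications ($\CG\cong B$, $S\cong\varphi$, $I$ the Yoneda embedding, $I^*\cong\mathrm{id}$ so $\Ker I^*=0$) merely filling in the details the paper leaves implicit. Nothing further is needed.
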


From Corollary \ref{abloc} follows as well the case of flat
epimorphisms of rings:

\begin{corollary}\label{flatep} With the notations made in Proposition
\ref{ringepi}, consider the adjoint pair $(\varphi^*,\varphi_*)$,
where
\[\varphi^*:\Md(A)\to\Md(B),\ \varphi^*X=X\otimes_AB\] is the
induction functor and $\varphi_*$ is the restriction functor
defined above. Then this adjoint pair is a localization if and
only if $\varphi$ is a flat epimorphism of rings (i.e. is an
epimorphism of unitary rings making $B$ into a flat $A$-module).
\end{corollary}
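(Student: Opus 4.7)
The plan is to deduce this from Corollary \ref{abloc} by realising $\varphi:A\to B$ as a functor $T:\CU\to\CC$ in the one-object setting: take $\CU$ to be the preadditive category with a single object $*$ and $\CU(*,*)=A$, take $\CC=\Md(B)$, and let $T$ send $*$ to $B$ viewed as a right $B$-module, with $T(a)$ being left multiplication by $\varphi(a)$. Since $T^*$ is colimit preserving with $T^*A=B$, one has $T^*\cong -\otimes_AB=\varphi^*$, whose right adjoint is $\varphi_*$. Thus $(\varphi^*,\varphi_*)$ is an abelian localization iff the adjunction induced by $T$ is, so Corollary \ref{abloc} applies.

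First I would compute the canonical factorization $T=I\circ S$. By construction $\CG$ has one object with $\CG(*,*)=\CC(T*,T*)=\End_B(B_B)$. The usual isomorphism $\End_B(B_B)\to B$, $f\mapsto f(1)$, is a ring isomorphism (composition on the left side corresponds to multiplication on the right), and under this identification $S:A\to\CG\cong B$ is precisely $\varphi$, while $I:B\to\Md(B)$ is (equivalent to) the Yoneda embedding. Consequently $I^*:\Md(B)\to\Md(B)$ is naturally isomorphic to the identity functor, so $\Ker I^*=0$.

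Next I would check the three hypotheses of Corollary \ref{abloc}. Condition (1), that $\CG$ generate $\CC$, is immediate because $B$ is a projective generator of $\Md(B)$. Condition (2) asks that $S$ be a $\Ker I^*$-conditioned epimorphism; since $\Ker I^*=0$, this is exactly being a $0$-conditioned epimorphism, which by Lemma \ref{small} is equivalent to $S=\varphi$ being an epimorphism in the category of rings with several objects. Because $A$ and $B$ each have one object, this is the classical notion of an epimorphism of unitary rings. Condition (3), Ulmer's criterion of flatness, is by the theorem of Ulmer quoted in the text equivalent to exactness of $T^*=-\otimes_AB$, i.e.\ to flatness of $B$ as a left $A$-module via $\varphi$. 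Combining the three yields the characterization of $\varphi$ as a flat epimorphism.

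The only point demanding real care is the identification of the canonical factorization, namely the verification that the ring structure on $\End_B(B_B)$ (inherited from composition) matches that of $B$ and that $S$ coincides with $\varphi$ under this matching; this is where the left/right module conventions have to be kept straight, but it is otherwise routine. Everything else is a direct translation of Corollary \ref{abloc} into the one-object situation.
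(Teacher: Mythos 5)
Your proof is correct and takes essentially the same route the paper intends: the paper offers no explicit argument for Corollary \ref{flatep}, saying only that it follows from Corollary \ref{abloc}, and your write-up supplies exactly that derivation in the one-object case, with the key identifications ($\CG(\ast,\ast)=\End_B(B)\cong B$ as rings, $S=\varphi$, $I$ the Yoneda embedding so $\Ker I^*=0$, condition (1) automatic since $B$ generates $\Md(B)$, condition (2) reducing via Lemma \ref{small} to $\varphi$ being a ring epimorphism, and condition (3) via Ulmer's theorem to flatness of $B$ as a left $A$-module) all handled correctly. There is nothing to add.
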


Another interesting result concerning lax epimorphisms of rings
with several objects makes the object of investigations of
Krause's work \cite{KE}. In order to derive this result from our
Theorem \ref{resff}, we need the following:

\begin{lemma}\label{indeq}
Let $I:\CG\to\CV$ be a morphism of rings with several objects,
inducing a localization $(I^*,I_*)$. Then $I^*$ and $I_*$ are
mutually inverse equivalences of categories if and only if $G$ is
a $\Ker I^*$-closed $\CG$-module, for all $G\in\CG$.
\end{lemma}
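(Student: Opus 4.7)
The plan is the following. Set $\CL=\Ker I^*$. I first record two preparatory facts. Because $I^*$ is exact (by the localization hypothesis) and $\CL$ is by definition its kernel, the functor $I:\CG\to\Md(\CV)$ is generalized $\CL$-closed in the sense of Section~\ref{gcface}, so by Proposition~\ref{gencl} every object of the form $I_*A$ with $A\in\Md(\CV)$ is $\CL$-closed; in particular so is $I_*I^*G$ for each $G\in\CG$. Moreover, the fully faithfulness of $I_*$ implies that the counit $\varepsilon:I^*I_*\to\id$ is invertible, and the triangle identity $\varepsilon_{I^*X}\circ I^*\eta_X=\id_{I^*X}$ forces $I^*\eta_X$ to be invertible for every $X\in\Md(\CG)$; equivalently, $\ker\eta_X$ and $\coker\eta_X$ both lie in $\CL$.

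Granted these observations, the forward direction is immediate: if $I^*,I_*$ are mutually inverse equivalences then $\eta_G$ is invertible, so the first observation shows that $G\cong I_*I^*G$ is $\CL$-closed. For the converse I would assume every $G\in\CG$ is $\CL$-closed and aim to prove that $\eta_X$ is invertible for all $X\in\Md(\CG)$. Focusing first on $G\in\CG$, the morphism $\eta_G$ connects two $\CL$-closed modules and has kernel and cokernel in $\CL$; applying $Q$ turns $\eta_G$ into an isomorphism, and since $R\circ Q$ restricts to the identity on the full subcategory of $\CL$-closed modules in $\Md(\CG)$, $\eta_G$ itself is an isomorphism.

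The last task is to transport this from the generators to arbitrary $X$. I would present $X$ as the cokernel of a map $\bigoplus_j G_j\to\bigoplus_i G_i$ between coproducts of representables and apply the five-lemma to the resulting naturality square under $\eta$. This requires $I_*I^*$ to be right exact and to preserve coproducts. Both properties follow from Lemma~\ref{lradj}: $I^*$ is exact (by hypothesis) and cocontinuous (as a left adjoint), while $I_*$ is simultaneously a left and a right adjoint, hence both exact and cocontinuous. It then follows that $\eta_X$ is invertible for every $X$, so $I^*$ is fully faithful, and together with the fully faithfulness of $I_*$ this makes $(I^*,I_*)$ an adjoint equivalence. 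The only technical point in the whole argument is this last colimit step, whose validity depends essentially on the existence of the right adjoint $^*I$ for $I_*$ supplied by Lemma~\ref{lradj}; mere exactness of $I_*$ would not suffice.
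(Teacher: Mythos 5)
Your proposal is correct and takes essentially the same route as the paper: the converse is proved by reducing to the representables $G\in\CG$ and then transporting the isomorphism $\eta_X:X\to(I_*\circ I^*)X$ to arbitrary $X$ via a free presentation, using that $I_*\circ I^*$ preserves colimits because $I_*$ has the right adjoint $^*I$ of Lemma \ref{lradj} --- exactly the paper's diagram argument, including your correct observation that mere exactness of $I_*$ would not handle the infinite coproducts. You additionally spell out what the paper compresses into ``by hypothesis'' (that $\eta_G$ is invertible for $\CL$-closed $G$, via Proposition \ref{gencl} and the unit of $(Q,R)$) and give a slightly different but equally valid forward direction ($G\cong I_*I^*G$ is closed, versus the paper's remark that $\Ker I^*=0$); these are refinements, not a different proof.
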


\begin{proof} The direct implication is obvious since, if $I^*$ is
an equivalence, then $\Ker I^*$ is $0$.

Conversely, let $X\in\Md(\CG)$ arbitrary. We want to show that the
arrow of adjunction $X\to(I_*\circ I^*)X$ is an isomorphism. In
order to do this, apply the colimit preserving functor $I_*\circ
I^*$ (see Lemma \ref{lradj}) to a free presentation of $X$. We
obtain a diagram with exact rows: \[\diagram \bigoplus
G_j'\rto\dto&\bigoplus G_i\rto\dto&X\rto\dto&0\\
\bigoplus(I_*\circ I^*)G_j'\rto&\bigoplus(I_*\circ
I^*)G_i\rto&(I_*\circ I^*)X\rto&0
\enddiagram\]
The first two vertical morphisms are isomorphisms by hypothesis,
therefore the same is true for the third.
\end{proof}

\begin{proposition}\label{epiupf} Let
$T:\CU\to\CV$ be a morphism of rings with several objects, and let
$\CU\overset{S}\to\CG\overset{I}\to\CV$ be its canonical
factorization. Then the following are equivalent:
\begin{itemize}
\item[{\rm (i)}] The functor $T$ is a lax epimorphism. \item[{\rm
(ii)}] $S$ is an epimorphism in the category of rings with several
objects, and $I$ induces an equivalence $\Md(\CG)\to\Md(\CV)$.
\item[{\rm (iii)}] $S$ is an epimorphism in the category of rings
with several objects, and for every object $V\in\CV$, there exists
a finite set of objects $G_i\in\CG$ with maps $v_i:V\to IG_i\to V$
in $\CV$ such that $1_V=\sum_{i}v_i$.
\end{itemize}
\end{proposition}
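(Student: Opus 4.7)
The plan is to assemble the machinery already developed: Theorem~\ref{resff} translates ``lax epimorphism'' into conditions on $S$ and $I$, Lemmas~\ref{tf} and \ref{indeq} characterize when $I^*$ is an equivalence, and Lemma~\ref{small} strips off the $\CL$-conditioning once $\CL=0$.

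For (i)$\Leftrightarrow$(ii): assuming $T$ is a lax epimorphism, Theorem~\ref{resff} says that $\CG$ generates $\CC=\Md(\CV)$ and that $S$ is a $\Ker I^*$-conditioned epimorphism; by Lemma~\ref{tf} the pair $(I^*,I_*)$ is then a localization. The same computation that appears in the proof of Theorem~\ref{resff} gives $\CG(-,G)\cong(I_*\circ I^*)G$, so every $G\in\CG$ is $\Ker I^*$-closed, and Lemma~\ref{indeq} upgrades the localization to an equivalence. Hence $\Ker I^*=0$, and by Lemma~\ref{small} the $\Ker I^*$-conditioning on $S$ collapses to being an ordinary epimorphism of rings with several objects. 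Conversely, from (ii) one gets that $I^*(\CG)$ is a set of generators of $\Md(\CV)$ (equivalences preserve generators), while $\Ker I^*=0$ makes any epimorphism $S$ vacuously $\Ker I^*$-conditioned; Theorem~\ref{resff} then recovers (i).

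For (ii)$\Leftrightarrow$(iii): the equality $1_V=\sum v_i$ with $v_i\colon V\to IG_i\to V$ is just a retract presentation of $V$ as a direct summand of $\bigoplus_{i=1}^n IG_i$ in $\CV$. Assuming (ii), the representable $\CV(-,V)$ is a finitely generated projective in $\Md(\CV)$, so its image under the inverse equivalence $I_*$ is a finitely generated projective in $\Md(\CG)$, hence a summand of some $\bigoplus\CG(-,G_i)$; applying $I^*$ and Yoneda returns the required retract decomposition of $V$. For the converse, $I$ fully faithful implies that $I^*$ is fully faithful; its essential image is closed under direct summands (since $\Md(\CG)$ has split idempotents) and, as $I^*$ is a fully faithful colimit preserving functor, also under arbitrary colimits taken in $\Md(\CV)$. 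Condition (iii) places every representable $\CV(-,V)$ in this essential image, and since every object of $\Md(\CV)$ is a colimit of representables, $I^*$ is essentially surjective, hence an equivalence.

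I expect the last step of (iii)$\Rightarrow$(ii) to be the most delicate point: one has to verify that the essential image of the fully faithful $I^*$ really is closed under arbitrary colimits in $\Md(\CV)$, which requires lifting a given diagram to $\Md(\CG)$ via full faithfulness and then invoking the colimit-preservation of $I^*$ to identify the colimit in $\Md(\CV)$ with $I^*$ applied to the lifted colimit.
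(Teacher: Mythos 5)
Your proof is correct, and for (i)$\Leftrightarrow$(ii) it is essentially the paper's own argument: both routes pass through Theorem \ref{resff}, use the computation $\CG(-,G)\cong(I_*\circ I^*)G$ to see that every $G\in\CG$ is $\Ker I^*$-closed, invoke Lemma \ref{indeq} to upgrade the localization $(I^*,I_*)$ to an equivalence (whence $\Ker I^*=0$), and use Lemma \ref{small} to identify $0$-conditioned epimorphisms with ordinary epimorphisms of rings with several objects (your word ``vacuously'' is slightly off --- only the supplementary condition ``generalized $0$-closed'' is vacuous; the passage between the two notions of epimorphism is Lemma \ref{small} itself, which you do cite). The genuine difference is in (ii)$\Leftrightarrow$(iii): the paper disposes of it by citing \cite[Lemma 4]{KE}, whereas you prove it from scratch --- (ii)$\Rightarrow$(iii) by transporting the finitely generated projective $\CV(-,V)$ through the quasi-inverse $I_*$, splitting it off a finite direct sum of representables $\CG(-,G_i)$, and reading off the retract data $1_V=\sum_i v_i$ via Yoneda; (iii)$\Rightarrow$(ii) by showing the fully faithful colimit-preserving $I^*$ has essential image closed under direct summands and colimits and containing all representables. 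This buys self-containedness where the paper defers to Krause, at the cost of one assertion you leave unproved: that $I$ fully faithful forces $I^*$ fully faithful. The assertion is true and provable with tools already in the paper --- the unit $X\to(I_*\circ I^*)X$ is an isomorphism on representables precisely because $I$ is fully faithful, and $I_*\circ I^*$ preserves colimits by Lemma \ref{lradj}, so the five-lemma argument on a free presentation used to prove Lemma \ref{indeq} applies verbatim --- but you should say this explicitly, since it is the hinge of your (iii)$\Rightarrow$(ii). Your flagged ``delicate point'' (lifting a diagram through the fully faithful $I^*$, using uniqueness of lifts for functoriality, then applying colimit preservation) is handled correctly; note that in this application it can be shortened, since essential surjectivity only needs closure under direct sums and under cokernels of maps between objects in the image, applied to a free presentation of an arbitrary $\CV$-module.
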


\begin{proof} (i)$\Leftrightarrow$(ii). We shown in the
proof of Theorem \ref{resff} that $G$ is $\Ker I^*$-closed for all
$G\in\CG$. Then the equivalence follows from Theorem \ref{resff},
Lemma \ref{small} and Lemma \ref{indeq}.

The equivalence (ii)$\Leftrightarrow$(iii) follows by \cite[Lemma
4]{KE}
\end{proof}

Note that a morphism of rings with several objects $T:\CU\to\CV$
satisfying the condition (iii) (therefore all) in Proposition
\ref{epiupf} above, is called an {\em epimorphism up to direct
factors} in \cite{KE}. Thus in Proposition \ref{epiupf} we give
another proof of the main result in \cite{KE} that an epimorphism
up to direct factors is exactly what we call a lax epimorphism.

\section{A particular case and an example}\label{partex}

We reset the notations and assumptions made in Section \ref{rfff},
namely $T:\CU\to\CC$ is a functor defined on a ring with several
objects, with values into an abelian AB5 category, and
$\CU\overset{S}\to\CG\overset{I}\to\CC$ is its canonical
factorization. To characterize the situation in which $T$ induces
an abelian localization $(T^*,T_*)$, as in Corollary \ref{abloc},
is the object of investigations in \cite{L}. Inspired by this, we
may find some sufficient conditions for $T_*$ being fully
faithful, as may be seen in the following:

\begin{proposition}\label{gf} Consider the following conditions relative to $T$:
\begin{itemize}
\item[{\rm (G)}] $T(\CU)$ generates $\CC$; consequently $\CC$ is
Grothendieck. \item[{\rm (F)}] If $\gamma:T(U)\to T(U')$ is a map
in $\CC$, where $U,U'\in\CU$, then there are objects $V_j\in\CU$
and maps $u_j:V_j\to U$ and $u'_j:V_j\to U'$, with $j\in J$, such
that $\gamma Tu_j=Tu'_j$ for all $j\in J$, and the sequence
\[\bigoplus TV_j\overset{(Tu_j)}\longrightarrow TU\to0\]
is exact in $\CC$.
\end{itemize}
Then the have: \begin{itemize} \item[{\rm (a)}] If $T$ induces a
localization then (G) and (F) hold true. \item[{\rm (b)}] If (G)
and (F) hold then $T$ is a generalized lax epimorphism.
\end{itemize}
\end{proposition}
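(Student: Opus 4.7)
The plan is to handle the two directions separately, each time reducing to results already proved in the paper.

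For part (a), assume $(T^*,T_*)$ is a localization, so $T^*$ is exact and the counit $\epsilon:T^*T_*\to\id_\CC$ is an isomorphism. Condition (G) is immediate: the representables generate $\Md(\CU)$, so for any $C\in\CC$ any epimorphism $\bigoplus U_i\to T_*C$ in $\Md(\CU)$ is sent by the colimit-preserving $T^*$ to an epimorphism $\bigoplus TU_i\to T^*T_*C\cong C$, witnessing that $T(\CU)$ generates $\CC$. For condition (F), given $\gamma:TU\to TU'$ I would transpose to the adjoint $\tilde\gamma:U\to T_*TU'$ in $\Md(\CU)$, form the pullback $P=U\times_{T_*TU'}U'$ of $\tilde\gamma$ along the unit $\eta_{U'}:U'\to T_*TU'$, and pick a presentation $\bigoplus V_j\to P\to 0$ with $V_j$ representable. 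The compositions $V_j\to P\to U$ and $V_j\to P\to U'$ define $u_j$ and $u'_j$.

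The two clauses of (F) then follow. The identity $\gamma\circ Tu_j=Tu'_j$ comes from applying $T^*$ to the pullback relation $\tilde\gamma\circ u_j=\eta_{U'}\circ u'_j$ and using the triangle identity $\epsilon_{TU'}\circ T^*\eta_{U'}=\id$ together with $\gamma=\epsilon_{TU'}\circ T^*\tilde\gamma$. The exactness of $\bigoplus TV_j\to TU\to 0$ reduces, via the right exactness of $T^*$, to showing $\coker(P\to U)\in\Ker T^*$; this holds because $\tilde\gamma$ induces a monomorphism $\coker(P\to U)\hookrightarrow\coker\eta_{U'}$, and $\coker\eta_{U'}\in\Ker T^*$ since $T^*\eta_{U'}$ is an isomorphism.

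For part (b), assume (G) and (F). By Lemma \ref{tf}, (G) forces $\CC$ to be Grothendieck, $(I^*,I_*)$ to be a localization, and $\CL:=\Ker I^*$ to be a localizing subcategory of $\Md(\CG)$; Theorem \ref{resff} then reduces the goal to verifying that $S$ is a $\CL$-conditioned epimorphism. Let $F,F':\CG\to\CA$ be functors into a cocomplete abelian category with $F\circ S=F'\circ S$ and $F$ generalized $\CL$-closed; they already coincide on objects because $S$ is bijective on objects. Given a morphism $\gamma:U'\to U$ in $\CG$ (that is, a map $\gamma:TU'\to TU$ in $\CC$), condition (F) supplies $V_j\in\CU$ with maps $u_j:V_j\to U'$ and $u'_j:V_j\to U$ satisfying $\gamma\circ Su_j=Su'_j$ in $\CG$ and such that $\bigoplus TV_j\to TU'\to 0$ is exact in $\CC$. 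Applying $F$ and $F'$ to $\gamma\circ Su_j=Su'_j$ and cancelling via $FS=F'S$ yields $(F\gamma-F'\gamma)\circ F(Su_j)=0$ for every $j$.

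The crux is then to argue that $(F(Su_j))_j:\bigoplus FV_j\to FU'$ is an epimorphism in $\CA$; this is the main obstacle, as the epimorphism given by (F) lives in $\CC$ rather than in $\CA$. Here the hypothesis that $F$ is generalized $\CL$-closed is essential: by Proposition \ref{gencl} the induced functor factors as $F^*\cong F^\star\circ Q$ with $F^\star$ a left adjoint, hence right exact, and applying $F^\star$ to the $\CC$-epimorphism supplied by (F) produces the required $\CA$-epimorphism. Consequently $F\gamma=F'\gamma$, so $F=F'$ on all of $\CG$, $S$ is a $\CL$-conditioned epimorphism, and Theorem \ref{resff} concludes that $T$ is a generalized lax epimorphism.
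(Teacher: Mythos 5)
Your proof is correct. For part (b) you follow essentially the paper's route: Lemma \ref{tf} plus Theorem \ref{resff} reduce the claim to showing that $S$ is a $\Ker I^*$-conditioned epimorphism, and your verification via (F) is the paper's argument; your only variation is at the final step, where you apply the right exact functor $F^\star$ supplied by Proposition \ref{gencl} to the epimorphism $(Tu_j)$ in $\CC$, whereas the paper equivalently applies the right exact, torsion-annihilating functor $F^*$ to the map $(Su_j)$ in $\Md(\CG)$, whose cokernel is $\Ker I^*$-torsion --- the two computations agree under $F^*\cong F^\star\circ Q$ with $Q$ identified with $I^*$. For part (a) you genuinely diverge: the paper represents $\gamma$ by calculus of fractions in $\CC\simeq\Md(\CU)/\Ker T^*$, writing $\gamma=T^*\alpha(T^*\sigma)^{-1}$ for a roof in $\Md(\CU)$ and then presenting the middle module by representables, while you construct the roof explicitly as the pullback $P=U\times_{T_*TU'}U'$ of the adjoint transpose $\tilde\gamma$ along the unit $\eta_{U'}$, verifying the two clauses of (F) by the triangle identity and the standard fact that a pullback square induces a monomorphism on cokernels of parallel arrows. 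Your version is more self-contained, since it avoids invoking the fraction calculus for quotient categories (and likewise your direct derivation of (G) from the counit isomorphism bypasses the citation of Corollary \ref{abloc}); in effect it is a hands-on proof of the fraction representation in this special case. One step you should make explicit: deducing $\coker(P\to U)\in\Ker T^*$ from the monomorphism $\coker(P\to U)\hookrightarrow\coker\eta_{U'}$ uses that $\Ker T^*$ is closed under subobjects, i.e. the \emph{left} exactness of $T^*$, not merely the right exactness you cite; this is available exactly because the localization hypothesis makes $T^*$ exact, so it is a gap in exposition rather than in substance.
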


\begin{proof} Suppose now that $T$ induces a localization $(T^*,T_*)$. The
condition (G) follows by Lemma \ref{tf} and Corollary \ref{abloc}.
We shall derive the condition (F) by computing $\gamma$ via
calculus of fractions: $\gamma=T^*\alpha(T^*\sigma)^{-1}$, where
$\alpha:X\to U'$, $\sigma:U\to X$ are maps in $\Md(\CU)$, with
$T^*\sigma$ invertible in $\CC$. Chose a presentation
\[\bigoplus V_j\to X\to0\] of $X$ in $\Md(\CU)$, where
$j$ runs over an arbitrary set $J$. For each $j\in J$, compose the
map $V_j\to X$ with $\alpha$, respectively $\sigma$, to obtain
maps $u_j:V_j\to U$ and $u'_j:V_j\to U'$, satisfying the property
$\gamma Tu_j=Tu'_j$. The required exactness of the sequence in (F)
follows by the fact that $T^*\sigma: TU\to T^*X$ is an
isomorphism.

Suppose now that (G) and (F) hold. The condition (G) is equivalent
to $\CG$ generates $\CC$, by Lemma \ref{tf}, so $\Ker I^*$ is a
localizing subcategory of $\Md(\CG)$. In order to apply Theorem
\ref{resff}, we want to show that $S$ is a $\Ker I^*$-conditioned
epimorphism. Let now $\CA$ be a cocomplete, abelian category and
let $F,F':\CG\to\CA$ be two functors, such that $F$ is generalized
$\Ker I^*$-closed, and $F\circ S=F'\circ S$. Then $F$ and $F'$
coincide on objects, since $S$ is bijective on objects. Let
$g:G\to G'$ be a map in $\CG$, and let $U,U'\in\CU$ such that
$SU=G$ and $SU'=G'$. Then $Ig:TU\to TU'$ is a map in $\CC$. By (F)
there exits objects $V_j\in\CU$ and maps $u_j:V_j\to U$ and
$u'_j:V_j\to U'$, with $j\in J$, such that $(Ig)(Tu_j)=Tu'_j$ for
all $j\in J$, and $(Tu_j)_{j\in J}:\bigoplus_{j\in J}TV_j\to TU$
is an epimorphism. Since $I$ is fully faithful, we deduce
$gSu_j=Su'_j$ for all $j\in J$, therefore \[(Fg)\upsilon_j=(F\circ
S)u'_j=(F'\circ S)u'_j=(F'g)\upsilon_j,\] where we denoted
$\upsilon_j=(F\circ S)u_j=(F'\circ S)u_j$, for all $j\in J$. But
the fact that $(Tu_j)_{j\in J}$ is an epimorphism means precisely
that the map $(Su_j)_{j\in J}$ has torsion cokernel. Therefore,
applying the right exact functor, which annihilates all $\Ker
I^*$-torsion module $F^*$, we deduce that $(\upsilon_j)_{j\in J}$
is an epimorphism, therefore $Fg=F'g$, so $F=F'$.
\end{proof}

\begin{remark}
{\rm In \cite[Theorem 1.2]{L} the functor $T$ inducing an abelian
localization is characterized by three conditions, two of which
being (G) and (F) from Proposition \ref{gf} above. The third
condition denoted (FF) in \cite{L} is a particular case of Ulmer's
criterion of flatness.}
\end{remark}

\begin{remark} {\rm We may also observe that in \cite[Theorem 3.7]{L} is given the
Gabriel filter (called there topology) on $\CU$, for which the
category $\CC$ is equivalent to the quotient category of
$\Md(\CU)$ modulo that Gabriel filter (with the terminology of
\cite{L}, $\CC$ is the the category of sheaves over $\CU$
respecting that topology). This filter consists of some submodules
(subfunctors) of free modules $U\in\Md(\CU)$ (representable
functors) which are called there ``epimorphic''. According to
\cite[Lemma 3.4]{L} a submodule $X\leq U$ is an epimorphic
subfunctor of $U$ if and only if $T^*X\cong TU$ naturally, thus we
are lead to the same Gabriel filter as in Corollary \ref{abloc}.}
\end{remark}

\begin{example}\label{exl} {\rm We recall an example in \cite{L},
in order to see how our results give a more comprehensive approach
of phenomena occurring here. Let $(X,\CO_X)$ be ringed space. We
denote by $\Pm(\CO_X)$ and $\Sm(\CO_X)$ the category of
presheaves, respectively sheaves of $\CO_X$-modules. The
sheafication functor $\Pm(\CO_X)\to\Sm(\CO_X)$ is exact and admits
a fully faithful right adjoint, so we are in the situation of a
localization. Further for all open subset $A\subseteq X$, consider
as in \cite[Section 5]{L} the finitely generated projective
presheaf $U_A$ associated to $A$, and denote by $G_A$ the
corresponding sheaf, under the sheafication functor. Then
\[\CU=\{U_A\mid A\hbox{ is a open subset of }X\}\] is a generating
subcategory of $\Pm(\CO_X)$, and
\[\CG=\{G_A\mid A\hbox{ is a open subset of }X\}\] generates
$\Sm(\CO_X)$. Viewing $\CU$ and $\CG$ as full subcategories of
$\Pm(\CO_X)$, respectively $\Sm(\CO_X)$, we know that $\Pm(\CO_X)$
is equivalent to $\Md(\CU)$ and $\Sm(\CO_X)$ is a localization of
$\Md(\CG)$. Denote by $S$ the functor $\CU\to\CG$ given by
$U_A\mapsto G_A$, for all open subsets $A\subseteq X$. The
relation between $\Md(\CU)$ and $\Md(\CG)$ is described in
\cite[Section 5]{L} as ``obscure''. Corollary \ref{abloc}
clarifies this relation, by observing that
\[\CU\overset{S}\to\CG\overset{I}\to\Sm(\CO_X),\] where $I$ is the
inclusion functor, is the canonical factorization of the
restriction at $\CU$ of the sheafication functor. Thus $S$ is a
$\Ker I^*$-conditioned epimorphism.}
\end{example}

\end{document}